\definecolor{myurlcolor}{rgb}{0.1,0.1,0.8}
\definecolor{mylinkcolor}{rgb}{0.05,0.05,0.4}
\newtheoremstyle{dotless_thm}{}{}{\itshape}{}{\bfseries}{}{ }{}
\newtheoremstyle{dotless_def}{}{}{}{}{\bfseries}{}{ }{}
\newtheoremstyle{dotless_blue}{}{}{\color{blue}}{}{\bfseries}{}{ }{}
\newtheoremstyle{dotless_red}{}{}{\color{red}}{}{\bfseries}{}{ }{}
\theoremstyle{dotless_thm}
\newtheorem{thm}{Theorem}
\newtheorem*{thm*}{Theorem}
\numberwithin{thm}{section}
\newtheorem{cor}[thm]{Corollary}
\newtheorem*{cor*}{Corollary}
\newtheorem{lem}[thm]{Lemma}
\newtheorem*{lem*}{Lemma}
\newtheorem{prop}[thm]{Proposition}
\newtheorem*{prop*}{Proposition}
\newtheorem*{thmgt_mag}{Theorem~\ref{thm:gt_mag}} 
\newtheorem*{propex_acyc}{Proposition~\ref{prop:ex_acyc}} 
\theoremstyle{dotless_def}
\newtheorem{defn}[thm]{Definition}
\newtheorem{example}[thm]{Example}
\newtheorem{rmk}[thm]{Remark}
\definecolor{violet}{HTML}{990099}
\definecolor{indigo}{HTML}{6600cc}
\definecolor{paleindigo}{HTML}{eacffa}
\definecolor{blue}{HTML}{0000cc}
\definecolor{paleblue}{HTML}{bbb6fa}
\definecolor{green}{HTML}{009900}
\definecolor{yellow}{HTML}{ffff00}
\definecolor{orange}{HTML}{ff3300}
\definecolor{red}{HTML}{ff0000}
\definecolor{palegreen}{HTML}{e6fff7}
\definecolor{darkgreen}{HTML}{006633}
\newcommand{\bb}{\mathbb}
\newcommand{\bdiag}{\begin{equation*}\begin{tikzcd}}
\newcommand{\ediag}{\end{tikzcd}\end{equation*}}
\newcommand{\Mag}{\mathrm{Mag}}
\renewcommand{\th}{\textrm{th}}
\newcommand{\rk}{\mathrm{rk}}
\renewcommand{\vec}{\mathbf}
\tikzset{
    invisible/.style={opacity=0},
    visible on/.style={alt={#1{}{invisible}}},
    alt/.code args={<#1>#2#3}{%
      \alt<#1>{\pgfkeysalso{#2}}{\pgfkeysalso{#3}}%
  }
}
\title{Magnitude, homology, and the Whitney twist}
\author{Emily Roff}
\date{}
\begin{document}

\maketitle

\begin{abstract}
Magnitude is a numerical invariant of metric spaces and graphs, analogous, in a precise sense, to Euler characteristic. Magnitude homology is an algebraic invariant constructed to categorify magnitude. Among the important features of the magnitude of graphs is its behaviour with respect to an operation known as the Whitney twist. We give a homological account of magnitude's invariance under Whitney twists, extending the previously known result to encompass a substantially wider class of gluings. As well as providing a new tool for the computation of magnitudes, this is the first new theorem about magnitude to be proved using magnitude homology.
\end{abstract}

\tableofcontents

%%%%%%%%%%%%%%%%%%%%%%%%%%%%%%%%%%%%%%%%%%%%%%%%%%%%%

\section{Introduction}\label{sec:intro}

\emph{Magnitude} is an isometric invariant of metric spaces, so-named for its web of connections to `size-like' quantities of significance in various corners of mathematics \cite{LeinsterMagnitude2013}. The magnitude of a space \(X\) is a function 
\[\Mag(X)(-): [0, \infty) \to \bb{R}\]
whose parameter can be thought of as controlling the scale of the metric in \(X\). For any given choice of the parameter, magnitude behaves in many ways like the cardinality of finite sets: it is multiplicative with respect to \(\ell_1\)-products, additive with respect to disjoint unions, and satisfies an inclusion-exclusion formula with respect to a more general class of unions. Yet, unlike cardinality, magnitude is sensitive to the distances in a space.

\begin{figure}
\centering
\includegraphics[width=0.95\textwidth]{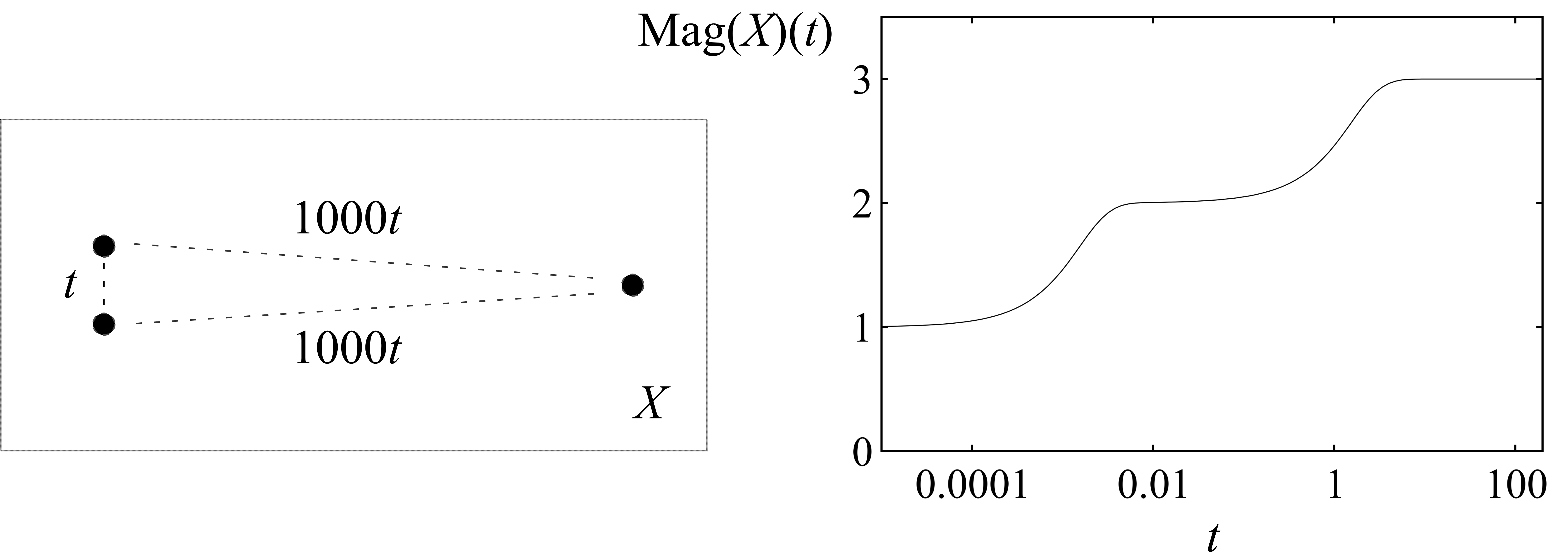}
\caption{The magnitude function of a three-point space. (Example 6.4.6 in Leinster \cite{LeinsterEntropy2021}, originally due to Willerton.)}
\label{fig:3pt}
\end{figure}

The three-point space \(X\) in \Cref{fig:3pt} is indicative of the general situation for finite spaces. When the scale parameter \(t\) is very small, the magnitude function is close to 1, as if recognizing that `from far away' \(X\) resembles a one-point space. As \(t\) increases, it becomes possible to distinguish the point on the right, and \(\Mag(X)(t)\) lingers close to 2. Once \(t\) is sufficiently large, all three points in the space are easily distinguished and \(\Mag(X)(t)\) approaches 3. Accordingly, the magnitude function is often referred to as counting `the effective number of points' in a finite space as the scale of the metric is varied.

Magnitude can be defined for many non-finite compact metric spaces, too, and in this context it has been shown to be related to a remarkable variety of more classical size-related quantities. Under suitable conditions, the magnitude function records information about the intrinsic volumes of integral geometry \cite{BarceloCarbery2018, GimperleinGoffeng, GimperleinGoffengLouca2022, LeinsterMagnitude2013b, MeckesMagnitude2019}, curvature and the Willmore energy \cite{GimperleinGoffengWillmore}, Minkowski dimension \cite{MeckesMagnitude2015} and even a family of indices used by biologists to quantify the diversity of ecological communities \cite{LeinsterMeasuring2012, LeinsterMaximizing2016, LeinsterRoffMaximum2021}. (These references are far from exhaustive.)

In this paper, though, we will be concerned specifically with the magnitude of graphs. Every undirected finite graph can be regarded as a metric space in which the distance between any pair of vertices is the number of edges in a shortest edge-path joining them. The magnitude of a graph is defined to be its magnitude with respect to this metric.

In this context, magnitude is most naturally interpreted as a formal power series with integer coefficients. Leinster  \cite{LeinsterMagnitude2019} gives a combinatorial formula for the coefficients in this series, revealing them to count---with appropriate signs---the paths of given length in a graph. (This terminology will be explained in \Cref{sec:magnitude}.) Observing that the alternating sum computing each coefficient resembles the Euler characteristic of a chain complex, Hepworth and Willerton in \cite{HepworthWillerton2017} construct a bigraded homology theory for graphs which `categorifies' their magnitude. That theory, \emph{magnitude homology}, has since been extended by Leinster and Shulman \cite{LeinsterMagnitude2017v3} to encompass all metric spaces. In what follows we denote the magnitude homology functor by \(MH_\bullet^*\), with \(\bullet\) and \(*\) standing for the two distinct gradings; its construction is described in \Cref{sec:magnitude}.

The magnitude homology of metric spaces has been well studied in recent years, and has been shown to capture subtle information about the convexity of a space \cite{KanetaYoshinaga2018} and the existence and uniqueness of geodesics \cite{AsaoMH2019, GomiGeodesic2019}. Even in the restricted setting of graphs it has proved to be a rich invariant. Particular attention has been paid to a class of graphs which are termed \emph{diagonal}: graphs \(X\) with the property that \(MH_k^\ell(X) = 0\) whenever \(k \neq \ell\) \cite{HepworthWillerton2017, GuGraph2018}. Most recently, Asao \cite{AsaoMH2022} has established a close relationship between magnitude homology and Grigor'yan--Muranov--Yau \emph{path homology}, exploiting this relationship to prove that every diagonal graph has trivial reduced path homology.

Meanwhile, various results about the formal cardinality-like properties of magnitude have been recovered as the numerical `shadows' of facts about magnitude homology. In particular, Bottinelli and Kaiser \cite{BottinelliKaiser2021} (following Hepworth and Willerton \cite{HepworthWillerton2017}) have proved a K\"unneth-type theorem and a Mayer--Vietoris theorem for the magnitude homology of metric spaces---categorifying, respectively, the mutiplicativity of magnitude and the inclusion-exclusion formula. This paper takes a similar form, offering a homological account of magnitude's behaviour with respect to an operation on graphs known as the {Whitney twist}.

\paragraph{Magnitude and the Whitney twist}

In Leinster's paper \cite{LeinsterMagnitude2019} initiating the study of magnitude for graphs, there are three central results concerning magnitude's behaviour under combinations such as products or gluings. The first of these, Lemma 3.6, says that magnitude is multiplicative with respect to the operation that graph theorists call the \emph{cartesian product}, \(\square\). That is, for any graphs \(X\) and \(Y\),
\begin{equation}\label{eq:mag_mult}
\Mag(X \square Y) = \Mag(X) \cdot \Mag(Y).
\end{equation}
The second such result, Theorem 4.9 in \cite{LeinsterMagnitude2019}, gives an inclusion-exclusion formula for the magnitude of a union of graphs, subject to conditions on the subgraphs involved. Each of these results is a specialization to graphs of a corresponding property of metric space magnitude.

The third combination result in \cite{LeinsterMagnitude2019} is different. It concerns an operation known as the \emph{Whitney twist}, which can be described as follows. Given two graphs \(G\) and \(H\), each with two (distinct) distinguished vertices, there are two ways to form a new graph: we can glue the distinguished vertices together one way round, or the other. If \(X\) and \(Y\) are the graphs thus formed, one says that \(X\) and \(Y\) differ by a Whitney twist. Leinster proves that magnitude is preserved under Whitney twists---\emph{provided} the gluing vertices happen to be adjacent.
\begin{thm}[\cite{LeinsterMagnitude2019}, Theorem 5.2]\label{thm:whitney_mag}
Let \(X\) and \(Y\) be graphs differing by a Whitney twist, and suppose that the two gluing vertices are adjacent in \(X\) (or equivalently in \(Y\)). Then
\[\Mag(X) = \Mag(Y).\]
\end{thm}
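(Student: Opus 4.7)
Since the paper's guiding philosophy is that magnitude homology is a categorification of magnitude, the natural strategy is to realise $\Mag(X) = \Mag(Y)$ as the graded Euler characteristic of a statement that holds at the level of $MH_\bullet^*$. Recall that, for any graph $Z$, one has $\Mag(Z) = \sum_{k,\ell}(-1)^k \rk\bigl(MH_k^\ell(Z)\bigr)\, q^\ell$. It therefore suffices to set up, from the decompositions $X = G \cup H$ and $Y = G \cup H$ (each with intersection the two-vertex subgraph $A$ spanned by the gluing vertices $p, q$), matching long exact sequences in magnitude homology whose outer terms depend only on $G$, $H$ and $A$, not on the twist.

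The plan is to apply the Mayer--Vietoris machinery for magnitude homology developed by Hepworth--Willerton and Bottinelli--Kaiser. Under the appropriate ``projecting'' hypothesis on the pair $(G, H)$, that machinery produces a long exact sequence
\[
\cdots \to MH_k^\ell(A) \to MH_k^\ell(G) \oplus MH_k^\ell(H) \to MH_k^\ell(X) \to MH_{k-1}^\ell(A) \to \cdots
\]
and a strictly analogous sequence for $Y$. Taking graded Euler characteristics in $q$ yields the inclusion--exclusion identity $\Mag(X) = \Mag(G) + \Mag(H) - \Mag(A)$, and the corresponding identity for $Y$. Since $G$, $H$, and the two-vertex subgraph $A$ (which, because $p \sim q$ in $X$, is just an edge) are the same pieces of data shared by both presentations---only the recipe for gluing them changes under a Whitney twist---the two right-hand sides coincide, and the theorem follows.

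The main obstacle I would expect lies in verifying that both decompositions are projecting, and this is precisely where the adjacency hypothesis is used. In general, the induced metric on $G$ as a subspace of $X$ can fail to coincide with its intrinsic metric: a pair $u, v \in G$ might admit a strictly shorter path in $X$ that leaves $G$ at $p$, crosses to $q$ through $H$, and re-enters $G$. The hypothesis $d_X(p,q) = 1$ collapses every such shortcut, because traversing $p \to q$ costs a single edge regardless of whether the edge sits inside $G$ or inside $H$; consequently every geodesic in $X$ can be chosen either to remain on one side of the gluing or to pass cleanly through $A$ in a controlled way. Pinning down this geometric observation and matching it to the formal statement of ``projecting'' is where I would expect the bulk of the work to lie. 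Once that is in place, the passage from the long exact sequence to inclusion--exclusion is formal, and the twist-invariance of the right-hand side is immediate. If the Mayer--Vietoris hypothesis turned out to be awkward to instantiate for one of $X$ or $Y$, a fallback would be to construct directly a length-preserving bijection between the generators of $MC_\bullet^*(X)$ and $MC_\bullet^*(Y)$ that interchanges the roles of $p$ and $q$ on the $H$-side; the adjacency condition is again precisely what makes such a swap compatible with the length grading.
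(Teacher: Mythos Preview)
Your main strategy has a genuine gap: the adjacency of the gluing vertices does \emph{not} in general make \((X;G,H)\) or \((Y;G,H)\) a projecting decomposition. The projecting condition requires that every vertex \(v \in H\) connected to \(A = \{p,q\}\) admit a single vertex \(\pi(v) \in A\) lying between \(v\) and \emph{every} point of \(A\); when \(d(p,q)=1\) this forces \(|d(v,p)-d(v,q)|=1\). But nothing prevents a vertex of \(H\) from being equidistant from \(p\) and \(q\), and such a vertex fails to project. The paper's \Cref{fig:whit_1} exhibits precisely this phenomenon: there the gluing vertices are adjacent, yet neither \((X;G,H)\) nor \((Y;G,H)\) is projecting, because \(g_1,g_2,h_1,h_2\) are each equidistant from \(k_0\) and \(k_1\). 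So your appeal to Mayer--Vietoris and inclusion--exclusion cannot go through as stated.

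The paper's route is substantially more delicate. It identifies exactly the dichotomy you have stumbled on---adjacency guarantees that each vertex of \(H\) either projects to \(A\) \emph{or} is equidistant from its two points---and builds the whole argument around this hybrid property (the defining feature of a \emph{sycamore twist}). Rather than invoking Mayer--Vietoris, the paper partitions generators of the magnitude chain complex into ``twistable'' paths (which admit a length-preserving bijection between \(X\) and \(Y\)) and ``non-twistable'' ones, then filters \(MC(X)\) and \(MC(Y)\) so that the non-twistable paths span acyclic subcomplexes of the associated graded pieces; their Euler characteristics therefore vanish and magnitude is carried entirely by the twistable paths. Your fallback idea of a direct length-preserving bijection on all generators is in the right spirit, but the paper shows it cannot be made to work globally: the non-twistable paths genuinely differ between \(X\) and \(Y\), and homological cancellation---not bijection---is what disposes of them.
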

It can be seen from Example 4.16 in the same paper that magnitude need \emph{not} be preserved under Whitney twists in which the gluing vertices are not adjacent. From this it follows that the magnitude of a graph---unlike many other graph invariants---is not determined by the cycle matroid or the Tutte polynomial. 

While multiplicativity and the inclusion-exclusion formula both lend weight to the interpretation of magnitude as a `size-like' invariant, the fact that magnitude {is} preserved under certain Whitney twists but not others is intriguing, and thus far poorly understood. In particular, it is not easy to extract from the proof of Leinster's Theorem 5.2 a clear understanding of what exactly it is about the adjacency of the two gluing vertices that guarantees the result, leading one to wonder whether the theorem might be merely an instance of a more general statement about  gluings. In this paper we prove that, in fact, it is.

Moreover, whereas the multiplicativity of magnitude and the inclusion-exclusion formula have been categorified, respectively, by the K\"unneth theorem and the Mayer--Vietoris sequence for magnitude homology, magnitude's behaviour under Whitney twists has until now resisted a homological explanation. Indeed, the behaviour of magnitude {homology} under Whitney twists is listed as an open question in \cite{HepworthWillerton2017}.

That question is not settled here; instead we prove, by homological means, an extension of \Cref{thm:whitney_mag} to a wider class of `twisted' graph gluings which we term \emph{sycamore twists} (\Cref{def:syc_twist}). Our main theorem runs as follows.

\begin{thmgt_mag}
Let \(X\) and \(Y\) be graphs which differ by a sycamore twist. Then
\[\Mag(X) = \Mag(Y).\]
\end{thmgt_mag}

As well as providing a new tool for computing magnitudes of gluings, \Cref{thm:gt_mag} can be regarded as clarifying the picture of graph magnitude in two senses---and in doing so it opens up new questions about magnitude homology.

First, the definition of a sycamore twist distills the important features of a Whitney twist with adjacent gluing vertices and elucidates the role played by this adjacency. Second, the proof of the theorem makes it clear that the behaviour of magnitude under twists is intimately related to the excision theorem and the Mayer--Vietoris sequence for magnitude homology and thereby to the inclusion-exclusion formula for magnitude.

Indeed, the result opens up the possibility that the excision and Mayer--Vietoris theorems might yet be strengthened, implying a more general inclusion-exclusion principle for magnitude. This would afford efficiency in computations, but it could also prove valuable for theoretical reasons. As things stand, the excision theorem is not strong enough to provide an axiomatization of magnitude homology for any class of graphs or metric spaces large enough to be interesting. A stronger excision theorem might allow for an axiomatic theory of magnitude homology and thereby of magnitude for finite spaces.

Finally, while various authors have obtained categorifications of known results about magnitude---and there are many interesting results about magnitude homology which do not appear to have immediate consequences for magnitude---to the best of our knowledge \Cref{thm:gt_mag} is the first \emph{new} result about magnitude to be proved using magnitude homology. We believe this offers reason to hope that other mysteries in the theory of magnitude may eventually be resolvable using homological techniques.

\paragraph{The structure of this paper}

We begin, in Section \ref{sec:magnitude}, with an overview of magnitude and magnitude homology. \Cref{sec:good_twist} introduces the main object of interest: a pair of graphs related by a sycamore twist. Sections \ref{sec:props_good_twist}--\ref{sec:excise} are dedicated to the proof of the main theorem.

\paragraph{Acknowledgements}

I am grateful to Tom Leinster for discussions throughout the development of this work, and to Richard Hepworth for his comments on the early draft of this paper which forms part of my doctoral thesis. Thanks are due also to Sebastian Schlegel Mejia for helpful conversations concerning homological algebra, and to the anonymous referee, whose attentive reading and suggestions have improved the exposition in several places.

%%%%%%%%%%%%%%%%%%%%%%%%%%%%%%%%%%%%%%%%%%%%%%%%%%%%%

\section{Magnitude and magnitude homology}\label{sec:magnitude}

Throughout this paper, a \textbf{graph} is an undirected finite graph with no loops or multiple edges. We endow a graph \(X\) with the metric in which the distance \(d(u,v)\) (or \(d_X(u,v)\)) from a vertex \(u\) to a vertex \(v\) is the number of edges in a minimal edge-path connecting them, or \(\infty\) if no such edge-path exists.

Let \(\bb{Z}[q]\) denote the ring of polynomials over \(\bb{Z}\) in one variable, \(q\). Given a graph \(X\), we construct a square matrix \(Z_X\) whose rows and columns are indexed by the vertices of \(X\) and whose \((u,v)\) entry is 
\[Z_X(u,v) = q^{d(u,v)} \in \bb{Z}[q],\]
adopting the convention that \(q^\infty = 0\).

The diagonal entries of the matrix \(Z_X\) are all equal to \(1\), and each of its off-diagonal entries is either zero or \(q^n\) for some natural number \(n \neq 0\). The determinant of \(Z_X\) is thus a polynomial in \(q\) with constant term 1, so is a unit in the ring \(\bb{Z}[[q]]\) of power series in \(q\) with integer coefficients. This ensures that \(Z_X\) is invertible over \(\bb{Z}[[q]]\), allowing us to make the following definition.

\begin{defn}[Leinster \cite{LeinsterMagnitude2019}, Definition 2.1]\label{defn:magnitude}
The \textbf{magnitude} of a graph \(X\) is
\[\Mag(X) = \sum_{u,v \in X} Z_X^{-1}(u,v) \in \bb{Z}[[q]].\]
\end{defn}

The magnitude of a graph is a specialization of an invariant defined in the vastly greater generality of finite enriched categories (\cite{LeinsterMagnitude2013}, Section 1). Specialized in a different direction, to ordinary categories, magnitude turns out to have close links to topological Euler characteristic (\cite{LeinsterMagnitude2008}, Propositions 2.11 and 2.12); interpreted for posets, it extends the theory of M\"obius inversion. Perhaps its most fertile environment, though, is that of metric spaces regarded as categories enriched in the poset \(([0, \infty], \geq)\) with monoidal structure given by addition. (For the classical account of this perspective on metric spaces, see Lawvere \cite{LawvereMetric1974}.) An overview of the properties of metric space magnitude can be found in \cite{LeinsterMagnitude2017b}.

Evaluating \(\Mag(X)\) at \(q=e^{-t}\) for all nonnegative real numbers \(t\) yields the \textbf{magnitude function} discussed in the introduction: 
\[\Mag(X)(t) = \Mag(X)|_{q = e^{-t}}.\]
In general this function may diverge at some \(t\); Example 2.2.7 in \cite{LeinsterMagnitude2013} describes a graph exhibiting this behaviour. In what follows, we will deal exclusively with the power series \(\Mag(X)\).

Given magnitude's relationship to Euler characteristic, it is natural to ask whether there exists an algebraic invariant which `categorifies' magnitude in the sense that singular homology categorifies the Euler characteristic of a topological space. In 2015 Hepworth and Willerton answered this question affirmatively in the case of graphs, constructing a homology theory whose Euler characteristic recovers their magnitude \cite{HepworthWillerton2017}; in 2017 Leinster and Shulman extended that construction to a broad class of enriched categories, including all metric spaces \cite{LeinsterMagnitude2017v3}. In this paper we will be working exclusively with Hepworth and Willerton's \emph{magnitude homology} for graphs, which we now describe.

Their construction depends on a combinatorial formula for the coefficients in the power series \(\Mag(X)\), given by Leinster in \cite{LeinsterMagnitude2019}. To state the formula, it will be helpful to make the following definition.

\begin{defn}\label{def:paths}
A \textbf{path}, or a \textbf{\(k\)-path}, in a graph \(X\) is a tuple \(\vec{x} = (x_0, \ldots, x_k)\) of vertices in \(X\). We call \(\vec{x}\) \textbf{non-degenerate} if \(x_0 \neq x_1 \neq \cdots \neq x_k\). The \textbf{length} of a path \(\vec{x}\) is
\[L(\vec{x}) = \sum_{i=0}^{k-1} d(x_i, x_{i+1}).\]
\end{defn}

\begin{rmk}\label{rmk:count}
Notice we do {not} require that consecutive vertices in a path be connected by an edge; thus, every non-degenerate \(k\)-path in a graph \(X\) has length at least \(k\), and possibly greater. Since all our graphs are finite, there can be only finitely many non-degenerate paths in \(X\) of any given length; moreover, there can be \emph{no} non-degenerate \(k\)-paths of length \(\ell\) for \(k > \ell\).
\end{rmk}

Leinster's formula says that calculating the magnitude of a graph \(X\) comes down to counting non-degenerate paths in \(X\) of every possible length. 

\begin{prop}[\cite{LeinsterMagnitude2019}, Proposition 3.9]\label{thm:mag_coeffs}
For any graph \(X\),
\begin{equation}\label{eq:mag_series}
\Mag(X) = \sum_{\ell=0}^\infty c_\ell(X) q^\ell
\end{equation}
where the coefficients are given by
\begin{equation}\label{eq:mag_coeffs}
c_\ell(X) = \sum_{k=0}^\ell (-1)^k \#\{\text{non-degenerate \(k\)-paths of length \(\ell\) in X}\}.
\end{equation}
\end{prop}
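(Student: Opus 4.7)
The plan is to compute $Z_X^{-1}$ by a Neumann-series expansion and then read off the coefficient of $q^\ell$ combinatorially. Write $Z_X = I + B$, where $B$ is the matrix with entries $B(u,v) = q^{d(u,v)}$ for $u \neq v$ in the same component of $X$, and $B(u,v) = 0$ otherwise. Every entry of $B$ lies in the ideal $(q) \subset \bb{Z}[[q]]$, so $B$ is topologically nilpotent and $Z_X^{-1} = \sum_{k \geq 0} (-1)^k B^k$, with the series converging entry-wise in $\bb{Z}[[q]]$.

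Next, I would unpack $B^k(u,v)$ as a sum over sequences of intermediate vertices. Expanding the matrix product gives
\[
B^k(u,v) = \sum_{x_1, \ldots, x_{k-1}} B(u,x_1) B(x_1,x_2) \cdots B(x_{k-1},v),
\]
and by the definition of $B$ the only surviving terms are those coming from tuples $(u, x_1, \ldots, x_{k-1}, v)$ with no two consecutive entries equal, i.e.\ from non-degenerate $k$-paths $\vec{x}$ with $x_0 = u$ and $x_k = v$. Each such path contributes $q^{d(x_0,x_1) + \cdots + d(x_{k-1},x_k)} = q^{L(\vec{x})}$. Summing $Z_X^{-1}(u,v)$ over all pairs $(u,v)$ then gives
\[
\Mag(X) = \sum_{k=0}^\infty (-1)^k \sum_{\vec{x} \text{ non-deg. } k\text{-path}} q^{L(\vec{x})}.
\]

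To finish, I would regroup this double sum by the total exponent $\ell = L(\vec{x})$. The legitimacy of this rearrangement---and the truncation of the inner sum at $k = \ell$ in the stated formula---is the one place where care is needed, but both follow from \Cref{rmk:count}: a non-degenerate $k$-path has length at least $k$, so for each fixed $\ell$ only the terms with $k \leq \ell$ contribute, and only finitely many paths are involved in each coefficient. This is really the only obstacle of substance in the argument; once it is observed, collecting terms yields exactly the formula $c_\ell(X) = \sum_{k=0}^\ell (-1)^k \#\{\text{non-degenerate } k\text{-paths of length }\ell\}$, proving the proposition.
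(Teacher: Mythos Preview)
The paper does not supply its own proof of this proposition; it simply quotes the result from Leinster \cite{LeinsterMagnitude2019}, Proposition 3.9, and moves on. Your argument is correct and is, in fact, essentially the proof Leinster gives there: expand $Z_X^{-1} = (I+B)^{-1}$ as a geometric series in the strictly off-diagonal part $B$, interpret the entries of $B^k$ as sums of $q^{L(\vec{x})}$ over non-degenerate $k$-paths, and regroup by length using the finiteness observation recorded in \Cref{rmk:count}. There is nothing to add.
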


Hepworth and Willerton proceed from \Cref{thm:mag_coeffs}, constructing an \(\bb{N}\)-graded chain complex whose Euler characteristic in grading \(\ell \in \bb{N}\) computes the coefficient \(c_\ell\) in Leinster's formula.

\begin{defn}[\cite{HepworthWillerton2017}, Definition 2]\label{def:MC_graphs}
The \textbf{magnitude chain complex} of a graph \(X\) is the direct sum of chain complexes
\[MC(X) = \bigoplus_{\ell \in \bb{N}} MC_\bullet^\ell(X)\]
where the chain complex \(MC_\bullet^\ell(X)\) is freely generated in degree \(k \geq 0\) by the set of non-degenerate \(k\)-paths in \(X\) whose length is \(\ell\). The boundary operator
\[d: MC_k^\ell(X) \to MC_{k-1}^\ell(X)\]
is an alternating sum \(d = \sum_{i=1}^{k-1} (-1)^i \delta_i\) where \(\delta_i\) is defined on generators by
\[\delta_i(x_0, \ldots, x_k) = \begin{cases} (x_0, \ldots, \widehat{x_i}, \ldots, x_k) & \text{if } L(x_0, \ldots, \widehat{x_i}, \ldots, x_k) = \ell \\
0 & \text{otherwise}.\end{cases}.\]
Here, \((x_0, \ldots, \widehat{x_i}, \ldots, x_k)\) denotes the path \((x_0, \ldots, x_{i-1}, x_{i+1}, \ldots, x_k)\).
\end{defn}

\begin{defn}[\cite{HepworthWillerton2017}, Definition 3]\label{def:MH_graphs}
The \textbf{magnitude homology} of a graph \(X\) is the bigraded abelian group defined for \((k,\ell) \in \bb{N} \times \bb{N}\) by
\[MH_k^\ell(X) = H_k(MC_\bullet^\ell(X)).\]
We will refer to \(\ell\) as the \textbf{length grading} and \(k\) as the \textbf{homological degree} of the abelian groups \(MC_k^\ell(X)\) and \(MH_k^\ell(X)\).
\end{defn}

That the magnitude of a graph can be recovered from its magnitude chain complex follows immediately from \Cref{thm:mag_coeffs}.

\begin{thm}[\cite{HepworthWillerton2017}, Theorem 2.8]\label{thm:MH_mag}
Let \(X\) be a graph. Then
\[\Mag(X) = \sum_{\ell \in \bb{N}} \chi(MC_\bullet^\ell(X)) q^\ell.\]
\end{thm}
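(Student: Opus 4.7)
The plan is to deduce the theorem directly from Leinster's formula in \Cref{thm:mag_coeffs} together with the construction of \(MC_\bullet^\ell(X)\) as a free chain complex. The only genuine content is to check that the Euler characteristic is well-defined and matches the coefficient \(c_\ell(X)\) term by term; this is an unpacking of definitions rather than a substantive argument.

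First I would observe that, by \Cref{def:MC_graphs}, the abelian group \(MC_k^\ell(X)\) is free with rank equal to the number of non-degenerate \(k\)-paths in \(X\) of length \(\ell\). By \Cref{rmk:count} this rank is finite, and moreover vanishes whenever \(k > \ell\). Consequently, for each fixed \(\ell \in \bb{N}\) the chain complex \(MC_\bullet^\ell(X)\) is a bounded complex of finitely generated free abelian groups, concentrated in degrees \(0 \leq k \leq \ell\). This ensures that its Euler characteristic is well-defined and, by the standard invariance result, can be computed either from the ranks of the terms or from the ranks of the homology groups:
\[\chi(MC_\bullet^\ell(X)) = \sum_{k=0}^{\ell} (-1)^k \rk(MC_k^\ell(X)).\]

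Next, substituting the combinatorial description of the rank yields
\[\chi(MC_\bullet^\ell(X)) = \sum_{k=0}^{\ell} (-1)^k \#\{\text{non-degenerate } k\text{-paths of length } \ell \text{ in } X\},\]
which is precisely the coefficient \(c_\ell(X)\) appearing in \Cref{thm:mag_coeffs}. Summing over \(\ell\) and weighting by \(q^\ell\) gives the desired identity \(\Mag(X) = \sum_{\ell} \chi(MC_\bullet^\ell(X)) q^\ell\).

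There is no real obstacle here; the closest thing to a subtle point is simply confirming the boundedness of \(MC_\bullet^\ell(X)\) so that \(\chi\) makes sense, which is immediate from \Cref{rmk:count}. The theorem is essentially a bookkeeping observation that Leinster's path-counting formula is exactly the categorified statement that the generators of \(MC_\bullet^\ell(X)\) enumerate non-degenerate paths graded by length.
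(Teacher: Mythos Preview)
Your proposal is correct and matches the paper's approach exactly: the paper simply remarks that the statement follows immediately from \Cref{thm:mag_coeffs}, and your argument spells out that immediate deduction by identifying \(\chi(MC_\bullet^\ell(X))\) with \(c_\ell(X)\) via the definition of the chain groups and the boundedness noted in \Cref{rmk:count}.
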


The construction of the magnitude chain complex---and thus magnitude homology---is functorial with respect to morphisms of graphs which preserve or contract edges; equivalently, vertex functions \(f: X \to Y\) satisfying
\[d_Y(f(x), f(x')) \leq d_X(x,x')\]
for all pairs of vertices \(x,x'\) in \(X\). These \textbf{distance-decreasing} maps are especially natural to consider when regarding a metric space as an enriched category.

The chain map induced by a distance-decreasing function \(f: X \to Y\) is given on each generator \(\vec{x}=(x_0,\ldots,x_k)\) of \(MC(X)\) by
\[MC(f)(\vec{x}) = \begin{cases}
f(\vec{x}) & \text{if } L(f(\vec{x})) = L(\vec{x}) \\
0 & \text{otherwise},
\end{cases}\]
where \(f(\vec{x})\) denotes the path \((f(x_0),\ldots,f(x_k))\) in \(Y\). That is,
\[MC(f)(\vec{x}) = \begin{cases}
f(\vec{x}) & \text{if } d_Y(f(x_i),f(x_{i+1})) = d_X(x_i,x_{i+1}) \text{ for } 0 \leq i < k \\
0 & \text{otherwise}.
\end{cases}\]
In other words, the induced map \(MC(f): MC(X) \to MC(Y)\) retains information only about those paths in \(X\) which are mapped into \(Y\) by \(f\) in a strictly {length-preserving} manner.

On the other hand, lifting magnitude from an element of a ring of formal power series to an object of a category of chain complexes grants us the ability to study it homologically. By \Cref{rmk:count} the chain complex \(MC_\bullet^\ell(X)\) vanishes in homological degrees \(k > \ell\), while in degrees \(k \leq \ell\) it is finitely generated. Standard facts of homological algebra imply, then, that the Euler characteristic of the complex \(MC_\bullet^\ell(X)\) coincides with that of its homology. Thus, we can compute the \(\ell^\th\) coefficient in the power series \(\Mag(X)\) as
\[c_\ell = \sum_{k=0}^\ell (-1)^k \rk(MH_k^\ell(X)).\]
That is,
\begin{equation}\label{eq:mag_MH}
\Mag(X) = \sum_{\ell \in \bb{N}} \chi(MH_\bullet^\ell(X)) q^\ell.
\end{equation}
With this formula, homological techniques can be used to simplify the calculation of the coefficients in \(\Mag(X)\). This is the principle we will be applying in later sections of this paper. 

Before introducing the main object of interest, we briefly review the inclusion-exclusion theorem for magnitude and the Mayer--Vietoris theorem for the magnitude homology of graphs. Both theorems hold under the same conditions, captured by the next three definitions.

\begin{defn}[\cite{LeinsterMagnitude2019}, Definition 4.2]\label{def:convex}
A subgraph \(W \subseteq X\) is called \textbf{convex} if \(d_W(u,v) = d_X(u,v)\) for all pairs of vertices \(u,v \in W\). In other words, \(W\) is convex if the inclusion \(W \hookrightarrow X\) is an isometric embedding.
\end{defn}

\begin{defn}\label{def:vertices_project}
Let \(W \subseteq X\) be a convex subgraph. We say that a vertex \(v \in X\) \textbf{projects} to \(W\) if \(v\) is connected by an edge-path to some vertex in \(W\) and there exists a vertex \(\pi(v) \in W\) such that
\[d(v,w) = d(v,\pi(v)) + d(\pi(v),w)\]
for every vertex \(w\) in \(W\). We say a subgraph \(Y \subseteq X\) \textbf{projects} to \(W\) if  \(Y \cap W \neq \emptyset\) and every vertex in \(Y\) which is connected by an edge-path to \(W\) projects to \(W\).
\end{defn}

Note that, if a vertex \(v\) projects to \(W\), then \(\pi(v)\) is the \emph{unique} vertex of \(W\) closest to \(v\). Thus, writing \(U_W\) for the set of vertices in \(X\) which project to \(W\), we have a function \(\pi: U_W \to W\).

\begin{defn}[\cite{HepworthWillerton2017}, Definition 26]\label{def:proj_decomp}
A \textbf{projecting decomposition} is a triple \((X;G,H)\) where \(X\) is a graph with subgraphs \(G\) and \(H\) such that the following properties hold.
\begin{itemize}
\item \(X = G \cup H\).
\item \(G \cap H\) is convex in \(X\).
\item \(H\) projects to \(G \cap H\).
\end{itemize}
\end{defn}

The inclusion-exclusion formula for the magnitude of graphs (Leinster \cite{LeinsterMagnitude2019}, Theorem 4.9) says that if \((X; G,H)\) is a projecting decomposition then
\begin{equation*}
\Mag(X) = \Mag(G) + \Mag(H) - \Mag(G \cap H).
\end{equation*}
The Mayer--Vietoris theorem for magnitude homology (Hepworth and Willerton \cite{HepworthWillerton2017}, Theorem 29) categorifies this formula by a split short exact sequence
\begin{align*}\label{eq:mayer_vietoris}
0 \to MH_\bullet^*(G \cap H) \to MH_\bullet^*(G) \oplus MH_\bullet^*(H) \to MH_\bullet^*(X) \to 0.
\end{align*}

Hepworth and Willerton do {not} attempt to categorify Leinster's theorem concerning Whitney twists (\Cref{thm:whitney_mag} in this paper). Instead, they pose the question: do two graphs related by a Whitney twist along adjacent gluing vertices have isomorphic magnitude homology?

That question will not be answered here. Rather, we will give a homological proof of Leinster's theorem which does not depend on the existence of an isomorphism of magnitude homologies. Our main theorem will extend Leinster's result to encompass a wider class of twisted gluings we term \emph{sycamore twists}. The next section introduces these.

%%%%%%%%%%%%%%%%%%%%%%%%%%%%%%%%%%%%%%%%%%%%%%%%%%%%%

\section{Sycamore twists}\label{sec:good_twist}

We begin by stating the definition of a Whitney twist more formally.

\begin{defn}\label{def:whitneytwist}
Let \(G\) be a graph with two distinct distinguished vertices \(g_+\) and \(g_-\), and \(H\) a graph with distinct distinguished vertices \(h_+\) and \(h_-\). Form a new graph \(X\) by taking the disjoint union of \(G\) and \(H\) and identifying \(g_+\) with \(h_+\) and \(g_-\) with \(h_-\), then identifying any double edges that result. Form another graph \(Y\) in the same manner, this time identifying \(g_+\) with \(h_-\) and \(g_-\) with \(h_+\). The graphs \(X\) and \(Y\) then differ by a \textbf{Whitney twist}.
\end{defn}

\begin{figure}[h]
\adjustbox{scale=0.7, center}{
\begin{tikzcd}
& & \color{blue}{g_0} \arrow[blue, no head]{dl} \arrow[blue, no head]{dr} & & \color{violet}{h_0} \arrow[violet, no head]{dl} \arrow[violet, no head]{dr}\\
& \color{blue}{g_1} \arrow[blue, no head]{rr} \arrow[blue, no head]{ddrr} & & \color{green}{k_0}  \arrow[green, no head]{dd} \arrow[violet, no head]{rr} & & \color{violet}{h_1} \\
\color{blue}{g_2} \arrow[blue, no head]{ur}  \arrow[blue, no head]{dr} &&&&&& \color{violet}{h_2} \arrow[violet, no head]{ul}  \arrow[violet, no head]{dl} \\
& \color{blue}{g_3} \arrow[blue, no head]{rr} & & \color{green}{k_1}  \arrow[violet, no head]{rr}  \arrow[violet, no head]{uurr} & & \color{violet}{h_3} \\
X
\end{tikzcd}
\begin{tikzcd}
& & \color{blue}{g_0} \arrow[blue, no head]{dl} \arrow[blue, no head]{dr}\\
& \color{blue}{g_1} \arrow[blue, no head]{rr} \arrow[blue, no head]{ddrr} & & \color{green}{k_0}  \arrow[green, no head]{dd} \arrow[violet, no head]{rr}  \arrow[violet, no head]{ddrr} & & \color{violet}{h_3} \\
\color{blue}{g_2} \arrow[blue, no head]{ur}  \arrow[blue, no head]{dr} &&&&&& \color{violet}{h_2} \arrow[violet, no head]{ul}  \arrow[violet, no head]{dl} \\
& \color{blue}{g_3} \arrow[blue, no head]{rr} & & \color{green}{k_1}  \arrow[violet, no head]{rr} & & \color{violet}{h_1} \\
 & & & & \color{violet}{h_0} \arrow[violet, no head]{ul} \arrow[violet, no head]{ur} && Y
\end{tikzcd}
}
\caption{A Whitney twist along adjacent gluing vertices. Blue vertices and edges belong to \(G\); violet vertices and edges belong to \(H\); and green vertices and edges belong to \(G \cap H\). Notice that the graphs \(X\) and \(Y\) are non-isometric and that neither \((X;G,H)\) nor \((Y;G,H)\) is a projecting decomposition, since the vertices \(g_1, g_2, h_1\) and \(h_2\) do not project to \(G \cap H\).}\label{fig:whit_1}
\end{figure}
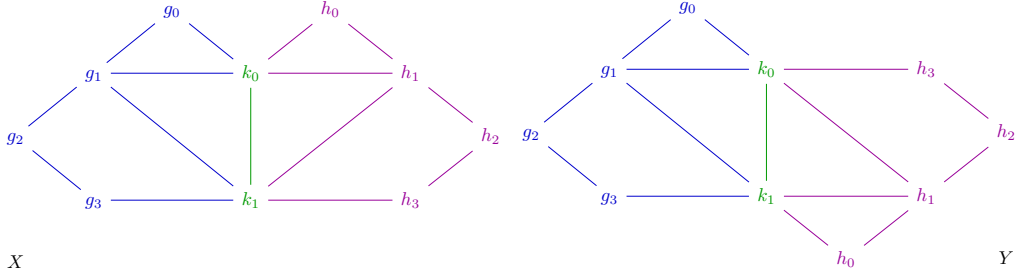

Consider the following straightforward generalization of \Cref{def:whitneytwist}.

\begin{defn}
Let \(G\), \(H\) and \(K\) be graphs equipped with induced subgraph inclusions
\[G \xhookleftarrow{\iota_G} K \xhookrightarrow{\iota_H} H\]
and let \(\alpha:K \to K\) be an isometry. Form a new graph \(X\) by taking the disjoint union \(G \sqcup H\) and identifying the vertices \(\iota_G(v)\) and \(\iota_H(v)\) for each \(v \in K\); wherever a double edge is created, identify the two edges. Form another graph \(Y\) in the same way, but identifying \(\iota_G(v)\) with \(\iota_H(\alpha(v))\) for each \(v \in K\). We will say that \(X\) and \(Y\) differ by a \textbf{generalized Whitney twist}, or just by a \textbf{twist}, and specify the twist by the tuple \((G, H, K, \alpha)\).
\end{defn}

There are two scenarios in which we can already be sure that two graphs differing by a twist will have the same magnitude---indeed, the same magnitude homology. One relates to the Mayer--Vietoris sequence. If \(K\) is convex in \(X\) and \(Y\), and \(H\) projects to \(K\), then \((X; G,H)\) is a {projecting decomposition} and so is \((Y;G,H)\). In that case the Mayer--Vietoris sequence tells us that the homology of both \(X\) and \(Y\) is determined by that of \(G\), \(H\) and \(K\); in particular, we have
\begin{equation}\label{eq:MHX_MHY}
MH_{\bullet}^*(X) \cong MH_{\bullet}^*(Y)
\end{equation}
and thus \(\Mag(X) = \Mag(Y)\).

On the other hand, suppose \(\alpha\) extends to a self-isometry of \(H\) which fixes every vertex in \(H \backslash K\)---that is, suppose 
\begin{equation}\label{eq:equidist}
d_H(v, k) = d_H(v, \alpha(k))
\end{equation}
for all \(v \in H \backslash K\) and all \(k \in K\). In this case \(X\) and \(Y\) are isometric, so certainly their magnitude homology agrees---though in general we cannot expect to compute it from the homologies of \(G\), \(H\) and \(K\).

For graphs differing by a Whitney twist---even one with adjacent gluing vertices---neither of these scenarios necessarily applies (see \Cref{fig:whit_1}). However, a Whitney twist along adjacent gluing vertices does satisfy a hybrid of the two conditions. In fact, Leinster's proof of the invariance of magnitude under such twists relies on the observation that the vertex sets of \(X\) and \(Y\) can be partitioned into those vertices which lie equidistant from \(g_+\) and \(g_-\), and those which lie closer to one gluing vertex than the other. The first subset consists precisely of those \(v\) satisfying (\ref{eq:equidist}); the second---thanks to the presence of the edge joining \(g_+\) and \(g_-\)---projects to the gluing set.

Our main theorem concerns twists that possess a relaxed version of this hybrid property: every vertex of the subgraph \(H\) either projects to the gluing set or else satisfies (\ref{eq:equidist}).

\begin{defn}\label{def:syc_twist}
A \textbf{sycamore twist} is a generalized Whitney twist \((G,H, K, \alpha)\) satisfying two additional conditions:
\begin{itemize}
\item \(K\) is convex in \(X\) and \(Y\).
\item Every vertex \(h \in H\) which does \emph{not} project to \(K\) satisfies \(d_H(h,k) =  d_H(h, \alpha(k))\) for every \(k \in K\).
\end{itemize}
\end{defn}

Not every pair of graphs that differ by sycamore twist can be related by a Whitney twist, as the following example shows.

\begin{example}\label{eg:syc_v_whitney}
Consider the graphs \(X\) and \(Y\) depicted in \Cref{fig:good_v_whitney}. They differ by a sycamore twist \((G,H,K,\alpha)\): blue vertices and edges belong to \(G\); violet vertices and edges to \(H\); and green vertices and edges to \(K = G \cap H\). The map \(\alpha\) interchanges \(v_5\) and \(v_6\), fixing \(v_4\) and \(v_7\). The vertices \(v_8\) and \(v_{11}\) project to \(K\), while \(v_9\) and \(v_{10}\) satisfy equation (\ref{eq:equidist}) with respect to each vertex in \(K\).

The graphs \(X\) and \(Y\) cannot be related by a Whitney twist. If they could, then the two gluing vertices would form a vertex cut in \(X\) (and in \(Y\)): deleting those two vertices and their incident edges would disconnect the graph. The graph \(X\) contains exactly 12 two-element vertex cuts. (One can count them by hand, or with a few lines of code.) Thus, there are in principle 12 graphs to which it can be related by a Whitney twist. However, each of those graphs is in fact isometric to \(X\); in particular, none of them is isometric to \(Y\).
\end{example}

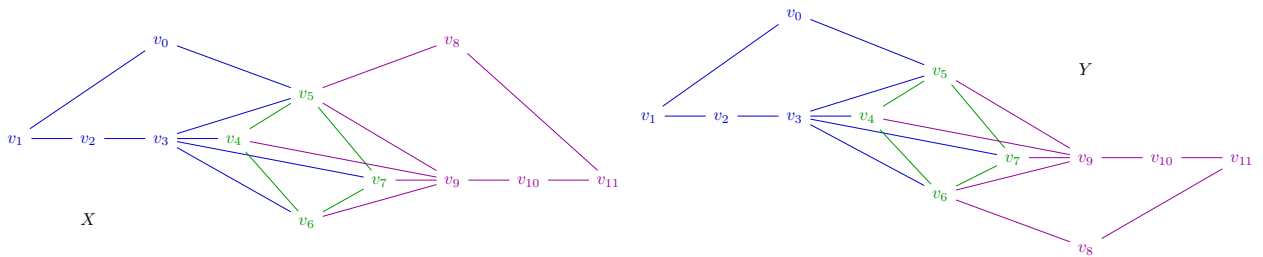
\begin{figure}[!h]
\adjustbox{scale=0.65,center}{
\begin{tikzcd}
& & \color{blue}{v_0} \arrow[blue, no head]{drr} \arrow[blue, no head]{ddll} & & & & 
\color{violet}{v_8} \arrow[violet, no head]{dll} \arrow[violet, no head]{ddddrr} \\
&  & & &[-15pt] 
\color{green}{v_5} \arrow[green, no head]{dl} \arrow[green, no head]{dddr}\\[-5pt]
\color{blue}{v_1} \arrow[blue, no head]{r} 
 & \color{blue}{v_2} \arrow[blue, no head]{r} & 
\color{blue}{v_3} \arrow[blue, no head]{r}  \arrow[blue, no head]{urr} \arrow[blue, no head]{dddrr} \arrow[blue, no head]{ddrrr} & 
\color{green}{v_4} & [-15pt]  & &  \\[-15pt]
 \\[-10pt]
& & & & &
\color{green}{v_7} & 
\color{violet}{v_9} \arrow[violet, no head]{uulll} \arrow[violet, no head]{uuull} \arrow[violet, no head]{l} \arrow[violet, no head]{dll} & 
\color{violet}{v_{10}} \arrow[violet, no head]{l} 
& \color{violet}{v_{11}} \arrow[violet, no head]{l}
\\[-10pt]
& X & & & 
\color{green}{v_6} \arrow[green, no head]{uuul} \arrow[green, no head]{ur}
\end{tikzcd}
\begin{tikzcd}
&& 
\color{blue}{v_0} \arrow[blue, no head]{drr} \arrow[blue, no head]{ddll} & & & & \\
&& & &[-15pt] 
\color{green}{v_5} \arrow[green, no head]{dl} \arrow[green, no head]{dddr} & & Y \\[-5pt]
\color{blue}{v_1} \arrow[blue, no head]{r} 
&\color{blue}{v_2} \arrow[blue, no head]{r} & 
\color{blue}{v_3} \arrow[blue, no head]{r} \arrow[blue, no head]{urr} \arrow[blue, no head]{dddrr} \arrow[blue, no head]{ddrrr} &
\color{green}{v_4} &[-15pt] & & \\[-15pt]
 \\[-10pt]
&& & & & 
\color{green}{v_7} & 
\color{violet}{v_9} \arrow[violet, no head]{uulll} \arrow[violet, no head]{uuull} \arrow[violet, no head]{l} \arrow[violet, no head]{dll} &
\color{violet}{v_{10}} \arrow[violet, no head]{l}
& \color{violet}{v_{11}} \arrow[violet, no head]{l}
 \\[-10pt]
&& & & 
\color{green}{v_6} \arrow[green, no head]{uuul} \arrow[green, no head]{ur} \\
&& & & & & 
\color{violet}{v_8} \arrow[violet, no head]{uurr} \arrow[violet, no head]{ull}
\end{tikzcd}
}
\caption{A sycamore twist which is not a Whitney twist. The sycamore twist is named after the winged seed pod of the sycamore tree, which twists in the air as it falls. (British schoolchildren call these `helicopters'.) The idea is that the vertices which project to \(K\) resemble wings projecting from a central core.}
\label{fig:good_v_whitney}
\end{figure}

Equally, not every Whitney twist is a sycamore twist. An example is given by any Whitney twist in which the gluing vertices are non-adjacent yet are connected by an edge-path in \(X\) (or equivalently in \(Y\))---for in this case, the subgraph on the gluing vertices is not convex. However, in a Whitney twist along \emph{adjacent} vertices, this subgraph is guaranteed to be convex. Indeed, by the discussion preceding \Cref{def:syc_twist}, any such twist is a sycamore twist with respect to the subgraph \(K\) comprising the two gluing vertices and the edge between them; the map \(\alpha\) flips the vertices of \(K\).

This paper's main theorem (\Cref{thm:gt_mag}) says that magnitude is invariant under sycamore twists. \Cref{eg:syc_v_whitney} shows that this is a proper generalization of Leinster's result. We will prove it using a homological argument, but the proof will {not} imply that magnitude homology is invariant under sycamore twists. Instead, the homological algebra serves to simplify the counting of paths in \(X\) and \(Y\). After establishing, in \Cref{sec:props_good_twist}, a few basic facts about the metrics on \(X\) and \(Y\), we partition the set of paths in each graph into those which are `twistable'---meaning that there is a particular bijection between the vertices of \(X\) and \(Y\) which preserves their length---and those which are not. (This is the subject of \Cref{sec:twist_exc}.) By showing that non-twistable paths generate a chain complex which is contractible, we discover (in Sections \ref{sec:mag_twists} and \ref{sec:excise}) that we can discount them when it comes to calculating magnitude.

%%%%%%%%%%%%%%%%%%%%%%%%%%%%

\section{Properties of sycamore twists}\label{sec:props_good_twist}

This section establishes those properties of a sycamore twist which will facilitate our analysis of its magnitude. We begin by fixing notation and recording basic information about the distance functions on the twisted graphs \(X\) and \(Y\). We will see that the set of non-gluing vertices in \(H\) can be partitioned into two subsets, each with a convenient property derived from the defining properties of a sycamore twist. Finally, we relate \(X\) and \(Y\) by two functions on their vertex sets which restrict locally to isometries; in \Cref{sec:twist_exc} these functions will be used to establish a relationship between the magnitude complexes of \(X\) and \(Y\).

\begin{defn}[Notation for gluing vertices]\label{def:embed_v}
Let \((G, H, K, \alpha)\) be a sycamore twist. Given a vertex \(v \in K\), we denote its image \(\iota_G(v)\) in \(G\) simply by \(v\), and do the same for its image \(\iota_H(v)\) in \(H\). We also use \(v\) to denote the vertex \(\iota_G(v) = \iota_H(v)\) in \(X\) and the vertex \(\iota_G(v) = \iota_H(\alpha(v))\) in \(Y\).
\end{defn}

Using this notation, for all \(g \in G\), \(v \in K\) and \(h\in H\) we have
\[d_X(g,v) = d_G(g,v) \text{ and } d_X(h,v) = d_H(h,v)\]
while for \(g \in G\),  \(v \in K\) and \(h \in H \backslash K\) we have
\[d_Y(g,v) = d_G(g,v) \text{ and } d_Y(h,v) = d_H(h, \alpha(v)).\]
It will be useful to record a description of the other distances in \(X\) and in \(Y\).

\begin{lem}\label{lem:dist_in_twist}
Given vertices \(u\) and \(v\) in \(X\) we have
\[d_X(u,v) = \begin{cases}
d_G(u,v) & u,v \in G \\
d_H(u,v) & u,v \in H \backslash K \\
\min_{k \in K} (d_G(u,k) + d_H(k,v)) & u \in G \text{ and } v \in H \backslash K.
\end{cases}\]
Given vertices \(u\) and \(v\) in \(Y\) we have
\[d_Y(u,v) = \begin{cases}
d_G(u,v) & u,v \in G \\
d_H(u,v) & u,v \in H \backslash K \\
 \min_{k \in K} (d_G(u,k) + d_H(\alpha(k),v)) & u \in G \text{ and } v \in H \backslash K.
\end{cases}\]
\end{lem}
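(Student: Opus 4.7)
The plan is to verify the formulas case by case, leveraging two structural observations about the construction of $X$ and $Y$: every edge of $X$ (and of $Y$) is inherited from either $G$ or $H$, so every edge-path in $X$ or $Y$ that passes between $G \setminus K$ and $H \setminus K$ must cross the gluing set $K$; and the convexity of $K$ in $X$ and $Y$ forces the intrinsic metrics $d_G$, $d_H$ and $d_K$ to agree on $K$. More precisely, since $K$ is an induced subgraph of both $G$ and $H$, we have $d_G(k,k') \leq d_K(k,k')$ and $d_H(k,k') \leq d_K(k,k')$, while inclusions $G \hookrightarrow X$, $H \hookrightarrow X$ give $d_X \leq d_G$ and $d_X \leq d_H$ on $K$; combined with $d_X = d_K$ on $K$ (convexity) this yields $d_G|_K = d_K = d_X|_K = d_H|_K$. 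The analogous identities hold in $Y$.

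Next I would handle the ``same side'' cases by a path-surgery argument. Take $u, v \in G$. The inequality $d_X(u,v) \leq d_G(u,v)$ is immediate since $G$ is a subgraph of $X$. For the reverse, consider any edge-path $u = w_0, w_1, \ldots, w_n = v$ in $X$ realising the distance. If it leaves $G$, then it enters $H \setminus K$ through some vertex $k_1 \in K$ and later returns to $G$ through some vertex $k_2 \in K$; by the observation above the intermediate segment has length at least $d_H(k_1,k_2) = d_G(k_1, k_2)$, so we may replace that segment by a shortest $G$-path from $k_1$ to $k_2$ without increasing total length. Iterating gives a $G$-path from $u$ to $v$ of length at most $d_X(u,v)$, hence $d_G(u,v) \leq d_X(u,v)$. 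The case $u,v \in H \setminus K$ is symmetric (and in fact the same argument shows $d_X(u,v) = d_H(u,v)$ for all $u,v \in H$, a minor strengthening that will be used in the mixed case).

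For the mixed case $u \in G$, $v \in H \setminus K$, any edge-path in $X$ from $u$ to $v$ must visit $K$; choose $k \in K$ to be the first such vertex along a shortest $X$-path. The subpath from $u$ to $k$ lies in $G$ and has length at least $d_G(u,k)$ by the preceding case, and the subpath from $k$ to $v$ has length at least $d_H(k,v)$. This gives $d_X(u,v) \geq \min_{k \in K}(d_G(u,k) + d_H(k,v))$. The reverse inequality is obtained by concatenating a shortest $G$-path from $u$ to any $k \in K$ with a shortest $H$-path from $k$ to $v$.

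Finally, the formulas for $d_Y$ follow by exactly the same three-case analysis, using the twisted identification $\iota_G(v) = \iota_H(\alpha(v))$ from \Cref{def:embed_v}: a gluing vertex viewed from $G$ as $k \in K$ is viewed from $H$ as $\alpha(k)$, so the mixed case picks up an $\alpha$ in the second summand. The bulk of the work is in the same-side surgery step; the main potential pitfall is remembering to invoke the identity $d_H|_K = d_G|_K$ when swapping an $H$-excursion for a $G$-segment, which is precisely the consequence of convexity recorded above.
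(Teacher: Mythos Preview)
Your proof is correct and follows essentially the same approach as the paper's: both establish the same-side cases first (convexity of $G$ and $H$ in $X$), then handle the mixed case by splitting a shortest path at a vertex of $K$ and invoking the same-side results. The only difference is that the paper cites Leinster's Lemmas 4.3 and 4.4 for the convexity of $G$ and $H$ and the fact that cross-paths must meet $K$, whereas you supply these arguments inline via path surgery and the observation that no edge joins $G\setminus K$ to $H\setminus K$; your version is thus self-contained but otherwise identical in structure.
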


\begin{proof}
The statements concerning \(u,v \in G\) and \(u,v \in H \backslash K\) follow from Lemma 4.3 in \cite{LeinsterMagnitude2019}, which says that if \(K = G \cap H\) is convex in \(X\) (or in \(Y\)) then both \(G\) and \(H\) are also convex in \(X\) (respectively in \(Y\)). Take \(u \in G\) and \(v \in H \backslash K\); we want to see that 
\begin{equation}\label{dist_twist_1}
d_X(u,v) = \min_{k \in K} (d_G(u,k) + d_H(k,v))
\end{equation}
and
\begin{equation}\label{dist_twist_2}
d_Y(u,v) =  \min_{k \in K} (d_G(u,k) + d_H(\alpha(k),v)).
\end{equation}

Let \(k\) be any vertex in \(K\). Suppose there exists an edge-path in \(G\) between \(u\) and \(k\), and an edge-path in \(H\) between \(k\) and \(v\): the concatenation of any such pair of paths gives an edge path from \(u\) to \(v\) in \(X\), so all three distances are finite and we have \(d_X(u,v) \leq d_G(u,k) + d_H(k,v)\). On the other hand, if there is \emph{no} edge-path in \(G\) between \(u\) and \(k\) then \(d_G(u,k) = \infty\) and the inequality still holds; the same is true if there is no edge-path in \(H\) between \(k\) and \(v\). Thus
\[d_X(u,v) \leq \min_{k \in K}(d_G(u,k) + d_H(k,v)).\]

For the reverse inequality, we may assume \(d_X(u,v)\) is finite. Let \(\vec{x} = (x_0, \ldots, x_n)\) be a minimal edge path from \(u\) to \(v\) in \(X\). Any path between a vertex in \(G\) and one in \(H\) must pass through at least one vertex in \(K\) (this is Lemma 4.4 in \cite{LeinsterMagnitude2019}). Let \(x_i\) be such a vertex in \(\vec{x}\). Then \((x_0, \ldots, x_i)\) is a minimal edge path between \(u\) and \(x_i\) in \(X\), and as \(G\) is convex in \(X\) this implies \(d_G(u,x_i) = d_X(u,x_i) = i\). Similarly, \(d_H(x_i, v) = d_X(x_i, v) = n-i\). So \(d_X(u,v) = d_G(u,x_i) + d_H(x_i,v)\) for some vertex \(x_i \in K\), and hence
\[d_X(u,v) \geq \min_{k \in K}(d_G(u,k) + d_H(k,v)).\]
This proves that (\ref{dist_twist_1}) holds; the same argument establishes (\ref{dist_twist_2}), after making use of the fact that \(d_Y(k,v) = d_H(\alpha(k),v)\) for all \(k \in K\) and \(v \in H \backslash K\).
\end{proof}

\begin{defn}
Let \((G,H, K, \alpha)\) be a sycamore twist. The vertices of \(K\) are the \textbf{gluing} vertices. Vertices in \(H \backslash K\) which project to \(K\) will be called \textbf{biased} vertices; we will denote the set of biased vertices by \(H_*\). A vertex \(v\) in \(H \backslash K\) which does \emph{not} project to \(K\) will be called \textbf{\(\alpha\)-neutral} or just \textbf{neutral}; we will denote the set of neutral vertices by \(H_0\).
\end{defn}

The terminology is motivated by \Cref{lem:neutral_equal}, which says that neutral vertices do not notice the difference between the metrics in \(X\) and in \(Y\). \Cref{lem:proj_in_twist} tells us that biased vertices have a complementary property: in both \(X\) and \(Y\) the vertices in \(H_*\) project to \(G\).

\begin{lem}\label{lem:neutral_equal}
Let \((G,H, K, \alpha)\) be a sycamore twist and let \(v\) be a neutral vertex in \(H\). For every \(u \in G \cup H\) we have
\[d_X(u,v) = d_Y(u,v).\]
\end{lem}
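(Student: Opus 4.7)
The plan is to fix a neutral vertex $v \in H \backslash K$ and, for each $u \in G \cup H$, split into cases depending on which of the subgraphs $G$, $K$, $H \backslash K$ contains $u$. In each case I will apply \Cref{lem:dist_in_twist} to reduce $d_X(u,v)$ and $d_Y(u,v)$ to expressions involving only the metrics on $G$, $H$, and the map $\alpha$, and then conclude equality using the neutrality hypothesis $d_H(v,k)=d_H(v,\alpha(k))$ for all $k \in K$.

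More concretely, the three cases are as follows. When $u \in H \backslash K$, \Cref{lem:dist_in_twist} gives $d_X(u,v) = d_H(u,v) = d_Y(u,v)$ immediately. When $u \in K$, the conventions following \Cref{def:embed_v} give $d_X(u,v) = d_H(u,v)$ and $d_Y(u,v) = d_H(\alpha(u),v)$, and these coincide by the defining property of neutrality applied at $k = u$. When $u \in G \backslash K$, \Cref{lem:dist_in_twist} yields
\[d_X(u,v) = \min_{k \in K}\bigl(d_G(u,k) + d_H(k,v)\bigr), \qquad d_Y(u,v) = \min_{k \in K}\bigl(d_G(u,k) + d_H(\alpha(k),v)\bigr),\]
and since $v$ is neutral the summands $d_H(k,v)$ and $d_H(\alpha(k),v)$ agree for every $k \in K$, so the two minima are equal term by term.

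There is no real obstacle: the only mild subtlety is bookkeeping around \Cref{def:embed_v}, which identifies a vertex $v \in K$ with $\iota_G(v) \in X$ but with $\iota_H(\alpha(v)) \in Y$, so that the ``gluing distance'' formulae differ between $X$ and $Y$ by a twist by $\alpha$ on the $H$-side. Once that identification is made explicit, the statement reduces in each case to the neutrality hypothesis, and no further computation is required.
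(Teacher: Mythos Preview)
Your proof is correct and follows essentially the same approach as the paper's: the paper's proof simply says that the neutrality condition $d_H(k,v)=d_H(\alpha(k),v)$ together with a comparison of the expressions for $d_X(u,v)$ and $d_Y(u,v)$ in \Cref{lem:dist_in_twist} gives the result, and your case analysis is exactly that comparison made explicit. The only difference is that you separate out the case $u \in K$ and handle it via the conventions after \Cref{def:embed_v}, whereas the paper would subsume it under the case $u \in G$ of \Cref{lem:dist_in_twist}; both are fine.
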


\begin{proof}
By the definition of a sycamore twist, each neutral vertex \(v\) satisfies
\[d_H(k,v) = d_H(\alpha(k),v)\]
for all \(k \in K\). The result follows upon comparing the expressions for \(d_X(u,v)\) and \(d_Y(u,v)\) in \Cref{lem:dist_in_twist}.
\end{proof}

\begin{defn}\label{def:between}
A vertex \(w\) in a graph \(G\) is \textbf{between} vertices \(u\) and \(v\) if
\[d_G(u,v) = d_G(u,w) + d_G(w,v).\]
\end{defn}

\begin{lem}\label{lem:proj_in_twist}
Let \((G,H, K, \alpha)\) be a sycamore twist. Then \(h \in H\) projects to \(K\) in \(H\) if and only if \(h\) projects to \(G\) in \(X\) and in \(Y\).
\end{lem}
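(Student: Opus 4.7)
The case $h \in K$ is immediate from the definition (take $\pi(h)=h$ in every case), so I would focus on the substantive case $h \in H \setminus K$ and split the proof into the two directions.

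For the forward direction, suppose $h \in H \setminus K$ projects to $K$ in $H$ with projection $\pi_K(h) \in K$; I claim that $\pi_K(h)$, viewed as a vertex of $G$ via $\iota_G$, serves as the projection of $h$ to $G$ in $X$. The edge-path condition transfers from $H$ to $X$ since every edge of $H$ is an edge of $X$. To verify the projection equation, fix $g \in G$. By \Cref{lem:dist_in_twist},
\[ d_X(h,g) \;=\; \min_{k \in K}\bigl(d_G(g,k) + d_H(k,h)\bigr), \]
and the hypothesis $d_H(k,h) = d_H(k,\pi_K(h)) + d_H(\pi_K(h),h)$ (valid for every $k \in K$) factors out the second summand, leaving
\[ d_X(h,g) \;=\; d_H(\pi_K(h),h) + \min_{k \in K}\bigl(d_G(g,k) + d_H(k,\pi_K(h))\bigr). \]
Since $K$ is convex in $X$, the convexity lemma (Leinster 4.3) gives $G$ and $H$ convex in $X$, and hence $K$ is convex in both $G$ and $H$. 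So $d_H(k,\pi_K(h)) = d_G(k,\pi_K(h))$, and the triangle inequality in $G$ (with the choice $k = \pi_K(h)$ achieving equality) identifies the inner minimum as $d_G(g,\pi_K(h))$. This yields $d_X(h,g) = d_X(h,\pi_K(h)) + d_X(\pi_K(h),g)$, as required. The analogous statement for $Y$ uses the same argument with $\alpha^{-1}(\pi_K(h))$ in place of $\pi_K(h)$, invoking $d_Y(h,g) = \min_{k \in K}(d_G(g,k) + d_H(\alpha(k),h))$ and the fact that $\alpha$ is an isometry of $K$ to rewrite $d_H(\alpha(k),\pi_K(h)) = d_K(k,\alpha^{-1}(\pi_K(h))) = d_G(k,\alpha^{-1}(\pi_K(h)))$.

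For the backward direction, it suffices to assume $h$ projects to $G$ in $X$ and deduce that $h$ projects to $K$ in $H$. Write $\pi := \pi^X(h) \in G$. The key step is to show $\pi \in K$. Apply \Cref{lem:dist_in_twist} to pick $k_0 \in K$ realizing $d_X(h,\pi) = d_G(\pi,k_0) + d_H(k_0,h)$. Since $k_0 \in K \subseteq G$, the projection property with $w = k_0$ gives
\[ d_H(h,k_0) \;=\; d_X(h,k_0) \;=\; d_X(h,\pi) + d_X(\pi,k_0) \;=\; 2\, d_G(\pi,k_0) + d_H(k_0,h), \]
forcing $d_G(\pi,k_0) = 0$, i.e.\ $\pi = k_0 \in K$. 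Once $\pi \in K$, the projection equation at any $w \in K \subseteq G$ reads $d_X(h,w) = d_X(h,\pi) + d_X(\pi,w)$; since $h, \pi, w$ all lie in $H$ (which is convex in $X$), this translates directly into $d_H(h,w) = d_H(h,\pi) + d_H(\pi,w)$, exhibiting $\pi$ as a projection of $h$ to $K$ in $H$. The edge-path condition in $H$ follows because a shortest $X$-path from $h$ to $\pi$ has length equal to $d_H(h,\pi)$, so such a path exists in $H$ as well.

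The only real obstacle is bookkeeping. The notational convention identifying a vertex of $K$ with its images in $G$, $H$, $X$, and $Y$ becomes subtle in $Y$ because of the $\alpha$-twist, and one must remember throughout that $d_Y(v,h) = d_H(\alpha(v),h)$ for $v \in K$ and $h \in H \setminus K$. Once the three convexity facts ($G$, $H$, $K$ convex in $X$, and hence $K$ convex in $G$ and $H$) and the explicit formulas of \Cref{lem:dist_in_twist} are on hand, the argument reduces cleanly in both directions to the triangle inequality in $G$ (or its $\alpha$-twisted version).
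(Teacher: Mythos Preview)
Your proof is correct and, for the forward direction, follows essentially the same route as the paper: plug the projection hypothesis into the distance formula of \Cref{lem:dist_in_twist}, pull out the constant term $d_H(\pi_K(h),h)$, and collapse the remaining minimum via the triangle inequality in $G$. The paper separates the cases $g \in K$ and $g \in G \setminus K$, whereas you treat them uniformly through the min formula; this is only an organizational difference. Your handling of $Y$ via $\alpha^{-1}(\pi_K(h))$ is likewise the same as the paper's.

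Where you differ is in the backward direction. The paper simply declares it ``clear''; you actually prove it, first forcing $\pi \in K$ by combining the min formula with the projection equation at $w = k_0$ to obtain $2\,d_G(\pi,k_0) = 0$, and then reading the projection equation back into $H$ via convexity. This is a genuine addition, and your argument is sound (you only use the hypothesis for $X$, which is of course enough). The paper presumably has in mind the one-line observation that any edge-path from $h \in H\setminus K$ to $G$ must meet $K$, so the unique nearest point $\pi(h)$ of $G$ lies in $K$, after which convexity of $H$ transfers the equation; your route via the explicit distance formula is an equally clean alternative.
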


\begin{proof}
The ``if" statement is clear; we must prove the converse. Suppose \(h \in H\) projects to \(K\) in \(H\), and let \(g\) be a vertex in \(G\). If \(g\) is a gluing vertex then
\[d_X(h,g) = d_H(h,g) = d_H(h, \pi(h)) + d_H(\pi(h), g) = d_X(h, \pi(h)) + d_X(\pi(h), g),\]
so we need to see that in \(X\) the vertex \(\pi(h)\) lies between \(h\) and every non-gluing vertex of \(G\). Take \(g \in G \backslash K\). Via \Cref{lem:dist_in_twist} we have
\begin{align*}
d_X(g,h) &= \min_{k\in K} (d_G(g,k) + d_H(k,h)) \\
&=\min_{k\in K} (d_G(g,k) + d_H(k,\pi(h)) + d_H(\pi(h), h)) \\
&= \min_{k\in K} (d_G(g,k ) + d_H(k,\pi(h))) + d_H(\pi(h), h) \\
&= d_X(g, \pi(h)) + d_X(\pi(h), h)
\end{align*}
as required. Thus, \(h\) projects to \(G\) in \(X\).

The argument for \(Y\) is essentially the same. If \(g \in G\) is a gluing vertex, then
\begin{align*}
d_Y(g,h) = d_H(\alpha(g),h) &= d_H(\alpha(g), \pi(h)) + d_H(\pi(h), h) \\
&= d_Y(g, \alpha^{-1}(\pi(h))) + d_Y(\alpha^{-1}(\pi(h)), h)
\end{align*}
so we need to see that \(\alpha^{-1}(\pi(h))\) lies between \(h\) and every non-gluing vertex of \(G\). Take \(g \in G \backslash K\). We have
\begin{align*}
d_Y(g,h) &= \min_{k\in K} (d_G(g,k) + d_H(\alpha(k),h)) \\
&=\min_{k\in K} (d_G(g,k) + d_H(\alpha(k), \pi(h)) + d_H(\pi(h), h)) \\
&=\left(\min_{k\in K} (d_G(g,k) + d_H(\alpha(k), \pi(h)))\right) + d_Y(\alpha^{-1}(\pi(h)), h) \\
&=\left(\min_{k\in K} (d_Y(g,k) + d_Y(k, \alpha^{-1}(\pi(h))))\right) + d_Y(\alpha^{-1}(\pi(h)), h) \\
&=d_Y(g, \alpha^{-1}(\pi(h))) + d_Y(\alpha^{-1}(\pi(h)), h)
\end{align*}
where the final line holds by the triangle inequality in \(Y\), since \(\alpha^{-1}(\pi(h))\) belongs to \(K\). Thus,  if \(h\) projects to \(K\) in \(H\), it projects to \(G\) in \(X\) and in \(Y\).
\end{proof}

Whenever \(X\) and \(Y\) differ by a twist \((G,H, K, \alpha)\) we can define bijective functions \(\tau_G, \tau_H\) from the vertex set of \(X\) to that of \(Y\) by 
\[\tau_G(v) = v \text{ for all } v \in X\]
and
\[\tau_H(v) = \begin{cases} v & \text{if \(v\) is not a gluing vertex} \\
\alpha^{-1}(v) & \text{if \(v\) is a gluing vertex}. \end{cases}\]
(Here, as above, we are using the notation for gluing vertices established in \Cref{def:embed_v}.)

The function \(\tau_G\) can be thought of as fixing the subgraph \(G\) and twisting \(H\), while \(\tau_H\) fixes \(H\) and twists \(G\). Except in degenerate cases, neither \(\tau_G\) nor \(\tau_H\) defines a graph homomorphism on the whole of \(X\). Rather, they are constructed so that, provided the twist is a \emph{sycamore} twist, each map is a graph homomorphism---in fact, a bijective isometry---when restricted to one part of the cover \(\{G \cup H_0, H\}\).

\begin{lem}\label{lem:tau_isom}
Let \((G,H, K, \alpha)\) be a sycamore twist. Then the function \(\tau_G\) restricts to an isometry on \(G \cup H_0\) and \(\tau_H\) restricts to an isometry on \(H\). 
\end{lem}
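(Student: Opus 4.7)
The plan is to verify each claim by case analysis on where the vertices of the pair $(u,v)$ sit in the decompositions $G = (G\setminus K) \sqcup K$ and $H = (H\setminus K) \sqcup K = H_* \sqcup H_0 \sqcup K$, invoking the distance formulas of \Cref{lem:dist_in_twist} and the fact (Lemma 4.3 of \cite{LeinsterMagnitude2019}) that $G$ and $H$ are convex in both $X$ and $Y$. Both $\tau_G$ and $\tau_H$ are bijections by construction, so the content is that each restriction is distance-preserving.

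For $\tau_G$ on $G \cup H_0$, since $\tau_G$ is the literal identity on vertex labels, the claim reduces to $d_X(u,v) = d_Y(u,v)$ for all $u,v \in G \cup H_0$. If $u,v \in G$, both sides equal $d_G(u,v)$ by the convexity of $G$ in $X$ and in $Y$. If at least one of $u,v$ lies in $H_0$, then \Cref{lem:neutral_equal} applies and immediately gives equality. These two cases exhaust $G \cup H_0$, so $\tau_G|_{G \cup H_0}$ is an isometry.

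For $\tau_H$ on $H$, I first unpack the definition at gluing vertices: using \Cref{def:embed_v}, the vertex $v \in K$ of $X$ is $\iota_H(v)$, and the vertex $\alpha^{-1}(v)$ of $Y$ is $\iota_H(\alpha(\alpha^{-1}(v))) = \iota_H(v)$, so $\tau_H$ sends the ``$\iota_H$-copy'' of $w \in H$ in $X$ to the ``$\iota_H$-copy'' of $w$ in $Y$. Now I split into three cases. If $w_1, w_2 \in H \setminus K$, then $\tau_H$ fixes them and \Cref{lem:dist_in_twist} gives $d_X(w_1,w_2) = d_H(w_1,w_2) = d_Y(w_1,w_2)$. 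If $w_1 \in K$ and $w_2 \in H \setminus K$, then $d_X(w_1,w_2) = d_H(w_1,w_2)$ and, using the identity $d_Y(g,v) = d_H(\alpha(g),v)$ for $g \in K$, $v \in H\setminus K$, we compute $d_Y(\tau_H(w_1), \tau_H(w_2)) = d_Y(\alpha^{-1}(w_1), w_2) = d_H(\alpha(\alpha^{-1}(w_1)), w_2) = d_H(w_1,w_2)$. Finally, if $w_1,w_2 \in K$, then by the convexity of $K$ in both $X$ and $Y$, both $d_X(w_1,w_2)$ and $d_Y(\alpha^{-1}(w_1), \alpha^{-1}(w_2))$ coincide with the corresponding intrinsic distances in $K$, which agree because $\alpha$ is an isometry of $K$.

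The main (minor) obstacle is bookkeeping: one has to be careful not to conflate the abstract vertex $v \in K$ with its different incarnations as $\iota_G(v)$ and $\iota_H(v)$ inside $X$ and $Y$, since $\tau_H$ acts differently on the two copies. Once the notation of \Cref{def:embed_v} is interpreted consistently, all the distance comparisons are immediate consequences of \Cref{lem:dist_in_twist}, \Cref{lem:neutral_equal}, the convexity of $K$ built into the definition of a sycamore twist, and the fact that $\alpha$ is by hypothesis an isometry of $K$.
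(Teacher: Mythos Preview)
Your proof is correct and follows essentially the same approach as the paper: a case analysis on the location of the two vertices, invoking \Cref{lem:dist_in_twist}, \Cref{lem:neutral_equal}, and the fact that $\alpha$ is an isometry of $K$. The only cosmetic difference is that for the case $w_1,w_2\in K$ you appeal directly to the convexity of $K$ in $Y$, whereas the paper routes the same computation through $d_H$ (using convexity of $H$); both are valid and amount to the same thing.
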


\begin{proof}
It is clear that \(\tau_G\) restricts to an isometry on \(G\) and on \(H_0\) separately, and \Cref{lem:neutral_equal} tells us that for all \(g \in G\) and \(h \in H_0\) we have 
\[d_Y(\tau_G(g), \tau_G(h)) = d_X(g,h),\]
so \(\tau_G\) restricts to an isometry on \(G \cup H_0\).

Similarly, it is clear that \(\tau_H\) restricts to an isometry on \(H \backslash K\). It also restricts to an isometry on \(K\): given any \(k, k' \in K\), we have
\begin{align*}
d_Y(\tau_H(k), \tau_H(k')) &= d_Y(\alpha^{-1}(k),\alpha^{-1}(k')) \\
&= d_H(\alpha(\alpha^{-1}(k)), \alpha(\alpha^{-1}(k'))) \\
&= d_H(k,k') \\
&= d_X(k,k')
\end{align*}
as required. Finally, if \(h \in H \backslash K\) and \(k \in K\) then
\[d_Y(\tau_H(h), \tau_H(k)) = d_Y(h, \alpha^{-1}(k)) = d_H(h, \alpha(\alpha^{-1}(k))) = d_H(h,k) = d_X(h,k).\]
Thus, \(\tau_H\) restricts to an isometry on \(H\).
\end{proof}

\Cref{sec:twist_exc} will make use of one more lemma which, although we state it in the notation of our present setup, is a general and elementary fact about projection.

\begin{lem}\label{lem:pi_between}
Suppose \(u,v \in G\) and \(w \in H_*\). Then \(v\) lies between \(u\) and \(w\) if and only if it lies between \(u\) and \(\pi(w)\).
\end{lem}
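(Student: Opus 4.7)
The plan is to reduce the equivalence to a single algebraic identity coming from the fact that $w$ projects to $G$ inside $X$. Specifically, since $w \in H_*$ projects to $K$ in $H$, \Cref{lem:proj_in_twist} guarantees that $w$ also projects to $G$ in $X$, and inspection of the proof of that lemma shows that the projection point is exactly $\pi(w) \in K \subseteq G$. This gives the crucial identity
\[
d_X(a,w) \;=\; d_X(a,\pi(w)) + d_X(\pi(w),w) \qquad \text{for every } a \in G.
\]

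With this in hand, the lemma follows by applying that identity twice. First I would set $a=u$ to rewrite $d_X(u,w)$. Next, since $v \in G$, I apply it again with $a=v$ to rewrite $d_X(v,w) = d_X(v,\pi(w)) + d_X(\pi(w),w)$. Now the hypothesis that $v$ lies between $u$ and $w$ reads
\[
d_X(u,\pi(w)) + d_X(\pi(w),w) \;=\; d_X(u,v) + d_X(v,\pi(w)) + d_X(\pi(w),w).
\]
Cancelling the common term $d_X(\pi(w),w)$ on both sides yields precisely the statement that $v$ lies between $u$ and $\pi(w)$; since this is a chain of equivalences rather than inequalities, both directions of the ``if and only if'' are obtained at once. (Note that $u,v,\pi(w)\in G$ and $G$ is convex in $X$ by \Cref{lem:dist_in_twist}, so the ``between'' relation for these three vertices is the same whether computed in $G$ or in $X$.)

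There is no real obstacle; the only point requiring any care is justifying that the projection of $w$ onto $G$ in $X$ really is $\pi(w)$ itself, rather than some other vertex of $G$, so that the two applications of the projection identity produce the clean cancellation above. That fact is exactly the content of the ``$X$-half'' of the proof of \Cref{lem:proj_in_twist}, which I would cite directly.
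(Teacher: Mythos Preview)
Your proof is correct and is essentially the same as the paper's: both arguments apply the projection identity $d(a,w)=d(a,\pi(w))+d(\pi(w),w)$ once with $a=u$ and once with $a=v$, then cancel the common term $d(\pi(w),w)$ to obtain the equivalence. The only cosmetic difference is that you explicitly invoke \Cref{lem:proj_in_twist} to justify that $\pi(w)$ is the projection of $w$ onto $G$ in $X$, whereas the paper simply asserts ``$\pi(w)$ is between $u$ and $w$'' and ``$\pi(w)$ is between $v$ and $w$'' without citation; your extra care here is harmless (and arguably clearer).
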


\begin{proof}
Since \(\pi(w)\) is between \(u\) and \(w\), we have
\[d(u, \pi(w)) = d(u,w) - d(\pi(w), w).\]
If, and only if, \(v\) is between \(u\) and \(w\), we can rewrite the right hand side as \(d(u,v) + d(v,w) - d(\pi(w), w)\). Since \(\pi(w)\) is between \(v\) and \(w\), this yields
\[d(u, \pi(w)) = d(u,v) + d(v, \pi(w)) + d(\pi(w), w) - d(\pi(w), w) = d(u,v) + d(v, \pi(w)). \qedhere\]
\end{proof}

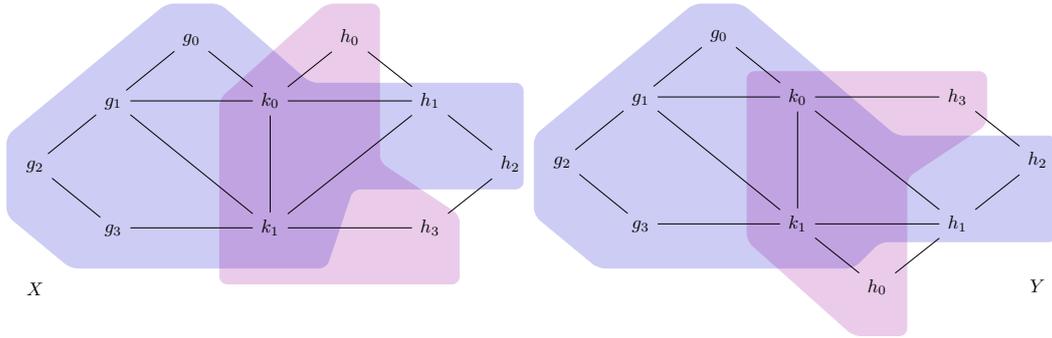
\begin{figure}[h]
\adjustbox{scale=0.7, center}{
\begin{tikzcd}[execute at begin picture={
{{\draw [draw=blue, fill=blue, draw opacity=.2, fill opacity=.2, rounded corners] (0.7,1.5) -- (1.5,1.5) -- (4.7,1.5) -- (4.7, -0.5) -- (1.5, -0.5) -- (1,-2) -- (-3.8,-2) -- (-5, -1) -- (-5,0.5) -- (-2, 3) -- (-1, 3) -- cycle;}}
{{\draw [draw=violet, fill=violet, draw opacity=.2, fill opacity=.2, rounded corners] (-1, 1.2) -- (1, 3) -- (2,3) -- (2,0) -- (3.5, -1) -- (3.5, -2.3) -- (-1, -2.3) -- (-1, -1) -- cycle;}}
  }]
& & \color{black}{g_0} \arrow[black, no head]{dl} \arrow[black, no head]{dr} & & \color{black}{h_0} \arrow[black, no head]{dl} \arrow[black, no head]{dr}\\
& \color{black}{g_1} \arrow[black, no head]{rr} \arrow[black, no head]{ddrr} & & \color{black}{k_0}  \arrow[black, no head]{dd} \arrow[black, no head]{rr} & & \color{black}{h_1} \\
\color{black}{g_2} \arrow[black, no head]{ur}  \arrow[black, no head]{dr} &&&&&& \color{black}{h_2} \arrow[black, no head]{ul}  \arrow[black, no head]{dl} \\
& \color{black}{g_3} \arrow[black, no head]{rr} & & \color{black}{k_1}  \arrow[black, no head]{rr}  \arrow[black, no head]{uurr} & & \color{black}{h_3} \\
X
\end{tikzcd}
\begin{tikzcd}[execute at begin picture={
{{\draw [draw=blue, fill=blue, draw opacity=.2, fill opacity=.2, rounded corners] (1.8,0.5) -- (4.8,0.5) -- (4.8, -1.5) -- (1.5, -1.5) -- (1,-2) -- (-3.8,-2) -- (-5, -1) -- (-5,0.5) -- (-2, 3) -- (-1.1, 3) -- cycle;}}
{{ \begin{scope}[yscale=-1,xscale=1,yshift=8]
	\draw [draw=violet, fill=violet, draw opacity=.2, fill opacity=.2, rounded corners]
	(-1, 1.2) -- (1, 3) -- (2,3) -- (2,0) -- (3.5, -1) -- (3.5, -2) -- (-1, -2) -- (-1, -1) -- cycle;
	\end{scope}}}
  }]
& & \color{black}{g_0} \arrow[black, no head]{dl} \arrow[black, no head]{dr}\\
& \color{black}{g_1} \arrow[black, no head]{rr} \arrow[black, no head]{ddrr} & & \color{black}{k_0}  \arrow[black, no head]{dd} \arrow[black, no head]{rr}  \arrow[black, no head]{ddrr} & & \color{black}{h_3} \\
\color{black}{g_2} \arrow[black, no head]{ur}  \arrow[black, no head]{dr} &&&&&& \color{black}{h_2} \arrow[black, no head]{ul}  \arrow[black, no head]{dl} \\
& \color{black}{g_3} \arrow[black, no head]{rr} & & \color{black}{k_1}  \arrow[black, no head]{rr} & & \color{black}{h_1} \\
 & & & & \color{black}{h_0} \arrow[black, no head]{ul} \arrow[black, no head]{ur} && Y
\end{tikzcd}
}
\caption{The subgraphs \(G'\) (shaded in blue) and \(H'\) (shaded in violet) do not cover \(X\) or \(Y\) as graphs: the edges \((h_0, h_1)\) and \((h_2,h_3)\) are missing from \(G' \cup H'\).}\label{fig:G'H'}
\end{figure}

\begin{rmk}\label{rmk:not_excision}
Given a sycamore twist \((G,H,K, \alpha)\), let \(G'\) denote the subgraph of \(X\) whose vertices are those in \(G \cup H_0\) and let \(H'\) denote the subgraph whose vertices are those in \(H_* \cup K\). Then the vertex sets of \(G'\) and \(H'\) cover the vertex set of \(X\), and every vertex in \(H'\) projects to \(G' \cap H' = K\).

If \((X; G',H')\) were a projecting decomposition, and the same were true for the similar triple \((Y;G',H')\), then our main theorem would follow from the inclusion-exclusion formula. But this is not the case. In general \(G'\) and \(H'\) do not cover \(X\) as graphs: there may exist edges \((u,v)\) in \(X\) such that \(u\) belongs to \(G' \backslash (G' \cap H')\) and \(v\) belongs to \(H' \backslash (G' \cap H')\), in which case \((u,v)\) does not appear in \(G' \cup H'\). \Cref{fig:G'H'} provides an example.
\end{rmk}

%%%%%%%%%%%%%%%%%%%%%%%%%%%%

\section{Twistable and non-twistable paths}\label{sec:twist_exc}

Consider the graphs \(X\) and \(Y\) in \Cref{fig:diff_paths}. There are evident differences in the sorts of paths that exist in each graph: for instance, \(Y\) has two 1-paths of length 3---the paths \((g_0, h_1)\) and \((h_1, g_0)\)---while \(X\) has none. On the other hand, the two graphs also have many paths in common. For example, any path which never visits the biased vertex \(h_1\) evidently has the same length whether it is regarded as belonging to \(X\) or to \(Y\). And the same is true for any path which never visits \(h_1\) without visiting the neutral vertex \(h_0\) immediately beforehand and immediately afterwards---this follows from the fact, characteristic of neutral vertices, that \(d_X(h_0,-) = d_Y(h_0,-)\) (\Cref{lem:neutral_equal}).

\begin{figure}[h]
\adjustbox{scale=.7, center}{
\begin{tikzcd}
& \color{blue}{g_0} \arrow[blue, no head]{dr} & & \color{violet}{h_1} \arrow[violet, no head]{dl} && \color{blue}{g_0} \arrow[blue, no head]{dr} & & &\\
X & & \color{green}{k_0} \arrow[blue, no head]{dl} \arrow[violet, no head]{dr} \arrow[green, no head]{dd} & & & & \color{green}{k_0} \arrow[blue, no head]{dl} \arrow[violet, no head]{dr} \arrow[green, no head]{dd} & & Y\\
& \color{blue}{g_1} \arrow[blue, no head]{dr}&& \color{violet}{h_0} \arrow[violet, no head]{dl}& & \color{blue}{g_1} \arrow[blue, no head]{dr}& & \color{violet}{h_0} \arrow[violet, no head]{dl}\\
& & \color{green}{k_1} &&&& \color{green}{k_1} \arrow[violet, no head]{dr}\\
& &&&&&& \color{violet}{h_1}
\end{tikzcd}
}
\caption{The graps \(X\) and \(Y\) differ by a sycamore twist along the graph \(k_0\)---\(k_1\).}
\label{fig:diff_paths}
\end{figure}
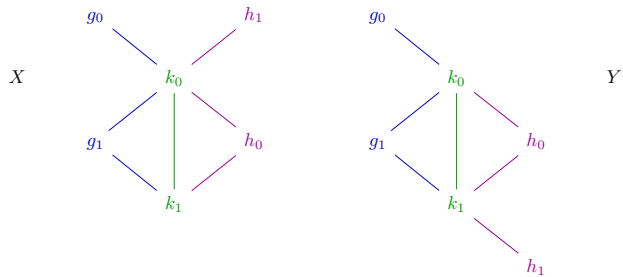

\Cref{prop:nonstick_biject}, below, extrapolates from examples of this type. We use the vertex functions \(\tau_G\) and \(\tau_H\) defined in \Cref{sec:props_good_twist} to construct a length-preserving bijection between those paths in \(X\) and \(Y\) which never cross from \(G\) to \(H_*\) (or \emph{vice versa}) without passing through a neutral vertex. Paths with this property will be called `twistable'; the next few definitions formalize this notion.

\begin{defn}
Let \(X\) be any graph and \(U\) a subset of its vertices. A path \(\vec{x} = (x_0, \ldots, x_k)\)
in \(X\) will be said to \textbf{visit \(n\) vertices in \(U\)} if there are \(n\) indices, \(i_1, \ldots, i_n\), such that the vertex \(x_{i_j}\) belongs to \(U\). We will say \(\vec{x}\) is \textbf{contained in \(U\)} if \emph{every} vertex \(x_0, \ldots, x_k\) belongs to \(U\). A \textbf{subpath} of \(\vec{x}\) is a string \((x_i, x_{i+1}, \ldots, x_j)\) of consecutive vertices in \(\vec{x}\), where \(0 \leq i \leq j \leq k\).
\end{defn}

\begin{defn}
Let \((G,H,K, \alpha)\) be a sycamore twist, and let \(\vec{x} = (x_0, \ldots, x_k)\) be a path in \(X\) (or in \(Y\)). We will say \(\vec{x}\) is \textbf{flat} if it is contained in \(G \cup H_0\) or contained in \(H\).
\end{defn}

Every flat path can be mapped in a length-preserving manner from \(X\) to \(Y\) using one or other of the vertex functions \(\tau_G\) and \(\tau_H\).

\begin{lem}\label{lem:flat_biject}
Suppose \(X\) and \(Y\) differ by a sycamore twist. For each \(k \geq 0\) there is a length-preserving bijection between flat \(k\)-paths in \(X\) and flat \(k\)-paths in \(Y\).
\end{lem}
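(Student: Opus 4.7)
The plan is to exhibit the bijection directly by patching together the two restricted isometries $\tau_G$ and $\tau_H$ provided by Lemma \ref{lem:tau_isom}. Recall that flatness means contained in $G \cup H_0$ or contained in $H$; these two conditions overlap exactly on paths contained in $K \cup H_0$, so some bookkeeping is needed to avoid double-counting.

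First I would partition flat paths in $X$ according to whether they visit a biased vertex. If $\vec{x} = (x_0, \ldots, x_k)$ is flat and visits some $x_i \in H_*$, then $\vec{x}$ cannot be contained in $G \cup H_0$ (which is disjoint from $H_*$), so $\vec{x}$ must be contained in $H$. If instead $\vec{x}$ visits no vertex of $H_*$, then in particular $\vec{x}$ is contained in $G \cup H_0$ (if it is in $H$, it lies in $H \setminus H_* = K \cup H_0 \subseteq G \cup H_0$). This dichotomy covers every flat path, with no ambiguity about which case applies.

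Next I would define $\phi : \{\text{flat } k\text{-paths in } X\} \to \{\text{flat } k\text{-paths in } Y\}$ by
\[
\phi(\vec{x}) = \begin{cases} \tau_H(\vec{x}) & \text{if } \vec{x} \text{ visits some vertex in } H_*, \\ \tau_G(\vec{x}) & \text{otherwise}.\end{cases}
\]
Length-preservation is immediate from Lemma \ref{lem:tau_isom}: in the first case all vertices of $\vec{x}$ lie in $H$, on which $\tau_H$ is an isometry; in the second case they lie in $G \cup H_0$, on which $\tau_G$ is an isometry. The well-definedness (that the image really is a flat $k$-path in $Y$) comes from the observation that $\tau_H$ sends $H$-vertices of $X$ to $H$-vertices of $Y$ (non-gluing vertices to themselves, gluing vertices $v$ to $\alpha^{-1}(v)$, which is again in $K$), and $\tau_G$ is the identity on vertex sets so preserves $G \cup H_0$.

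Finally I would verify bijectivity by constructing an explicit inverse on the $Y$-side using the same case split (swapping the roles of $\tau_G$ and $\tau_H^{-1}$, noting that $\tau_H$ is an involution because $\alpha$ is an isometry that one can compose with itself appropriately---or more simply, because $\tau_H$ restricted to the vertex set is a bijection whose inverse has the same form, sending a gluing vertex $v$ of $Y$ back to $\alpha(v)$). Since the two cases in the definition of $\phi$ have disjoint images in $Y$ (paths visiting $H_*$ versus paths avoiding $H_*$), no collisions occur and $\phi$ is bijective. There is no real obstacle here; the only thing requiring attention is the overlap of the two flatness conditions, which is resolved by letting the presence of a biased vertex dictate the choice of map.
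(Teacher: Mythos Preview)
Your proof is correct and is essentially the paper's own argument: the paper defines \(T(\vec{x}) = \tau_G(\vec{x})\) when \(\vec{x}\) is contained in \(G \cup H_0\) and \(T(\vec{x}) = \tau_H(\vec{x})\) otherwise, which for flat paths is exactly your dichotomy ``avoids \(H_*\)'' versus ``visits \(H_*\)''. The only cosmetic difference is that you make the resolution of the overlap explicit via the biased-vertex criterion, whereas the paper simply prioritizes the \(G \cup H_0\) case; and your remark that \(\tau_H\) is an involution is not quite right (since \(\alpha\) need not be), but you immediately correct yourself by describing the inverse as sending a gluing vertex \(v\) back to \(\alpha(v)\), which is what the paper does with \(\tau_H^{-1}\).
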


\begin{proof}
Given a flat path \(\vec{x}\) in \(X\), let \(T(\vec{x})\) be the path in \(Y\) defined by 
\[T(\vec{x}) = \begin{cases}
\tau_G(\vec{x}) & \text{if } \vec{x} \text{ is contained in } G \cup H_0 \\
\tau_H(\vec{x}) & \text{otherwise.}
\end{cases}\]
Clearly, if \(\vec{x}\) is a \(k\)-path then \(T(\vec{x})\) is a \(k\)-path too. Since \(\tau_G\) restricts to an isometry on \(G \cup H_0\) and \(\tau_H\) restricts to an isometry on \(H\) (by \Cref{lem:tau_isom}), the path \(T(\vec{x})\) is flat and its length is the same as that of \(\vec{x}\).

Thus, we have a length-preserving function
\[T: \{\text{flat \(k\)-paths in \(X\)}\} \to \{\text{flat \(k\)-paths in \(Y\)}\}.\]
We can construct a function \(T'\) in the other direction in the same manner, replacing \(\tau_G\) by \(\tau_G^{-1}\) and \(\tau_H\) by \(\tau_H^{-1}\). The maps \(T\) and \(T'\) are mutually inverse, proving the lemma.
\end{proof}

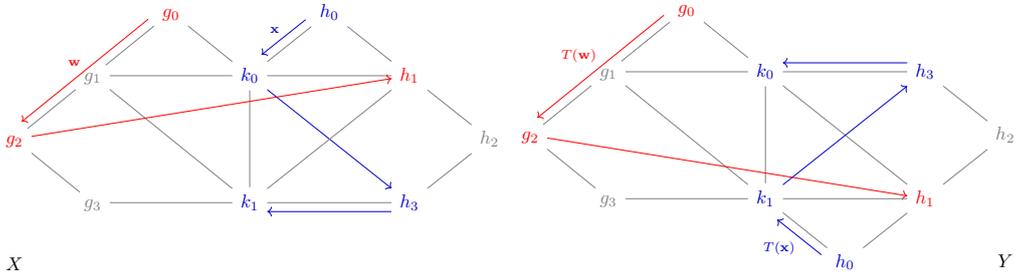
\begin{figure}[h]
\adjustbox{scale=0.7, center}{
\begin{tikzcd}
& & \color{red}{g_0} \arrow[red, shift right=2, swap]{ddll}{\vec{w}} \arrow[gray, no head]{dl} \arrow[gray, no head]{dr} & & \color{blue}{h_0} \arrow[blue, shift right=2, swap]{dl}{\vec{x}} \arrow[gray, no head]{dl} \arrow[gray, no head]{dr}\\
& \color{gray}{g_1} \arrow[gray, no head]{rr} \arrow[gray, no head]{ddrr} & & \color{blue}{k_0} \arrow[blue]{ddrr} \arrow[gray, no head]{dd} \arrow[gray, no head]{rr} & & \color{red}{h_1} \\
\color{red}{g_2} \arrow[red]{urrrrr} \arrow[gray, no head]{ur}  \arrow[gray, no head]{dr} &&&&&& \color{gray}{h_2} \arrow[gray, no head]{ul}  \arrow[gray, no head]{dl} \\
& \color{gray}{g_3} \arrow[gray, no head]{rr} & & \color{blue}{k_1}  \arrow[gray, no head]{rr}  \arrow[gray, no head]{uurr} & & \color{blue}{h_3} \arrow[blue, shift left=2]{ll} \\
X
\end{tikzcd}
\begin{tikzcd}
& & \color{red}{g_0} \arrow[red, shift right=2, swap]{ddll}{T(\vec{w})} \arrow[gray, no head]{dl} \arrow[gray, no head]{dr}\\
& \color{gray}{g_1} \arrow[gray, no head]{rr} \arrow[gray, no head]{ddrr} & & \color{blue}{k_0}  \arrow[gray, no head]{dd} \arrow[gray, no head]{rr}  \arrow[gray, no head]{ddrr} & & \color{blue}{h_3} \arrow[blue, shift right=2]{ll} \\
\color{red}{g_2} \arrow[red]{drrrrr}  \arrow[gray, no head]{ur}  \arrow[gray, no head]{dr} &&&&&& \color{gray}{h_2} \arrow[gray, no head]{ul}  \arrow[gray, no head]{dl} \\
& \color{gray}{g_3} \arrow[gray, no head]{rr} & & \color{blue}{k_1} \arrow[blue]{uurr} \arrow[gray, no head]{rr} & & \color{red}{h_1} \\
 & & & & \color{blue}{h_0} \arrow[blue, shift left=2]{ul}{T(\vec{x})} \arrow[gray, no head]{ul} \arrow[gray, no head]{ur} && Y
\end{tikzcd}
}
\caption{On the left we see two flat paths in the graph \(X\) from \Cref{fig:whit_1}. The path \(\vec{w}\) (in red) is contained in \(G \cup H_0\) and the path \(\vec{x}\) (in blue) is contained in \(H\). On the right we see their images \(T(\vec{w})\) and \(T(\vec{x})\) in \(Y\).}\label{fig:flat_path}
\end{figure}

Suppose we are given a flat path \(\vec{x} = (x_0, \ldots, x_k)\) which is contained in \(G \cup H_0\) and a flat path \(\vec{x}' = (x_0', \ldots, x_n')\) which is contained in \(H\), with the property that \(x_k = x_0'\) and this vertex is neutral. Then we can concatenate the paths \(\vec{x}\) and \(\vec{x}'\) in \(X\), and since \(\tau_G(x_k) = \tau_H(x_0')\) (as \(\tau_G\) and \(\tau_H\) agree on all but the gluing vertices) we can also concatenate the paths \(T(\vec{x})\) and \(T(\vec{x}')\) in \(Y\). Length sums over concatenation of paths, so we have that
\[L(T(\vec{x}) * T(\vec{x}') ) = L(T(\vec{x})) + L(T(\vec{x}')) = L(\vec{x}) + L(\vec{x}') = L(\vec{x} * \vec{x}').\]
By the same reasoning, \(T\) can be extended in a length-preserving manner to any sequence of finitely many flat paths concatenated at neutral vertices. We now give a name to such concatenations.

\begin{defn} 
Let \((G,H,K, \alpha)\) be a sycamore twist. A path \(\vec{x}\) in X (or in \(Y\)) is \textbf{twistable} if it can be decomposed as a {concatenation} of {paths}
\[\vec{x} = \vec{x}_0 * \vec{x}_2 * \cdots * \vec{x}_m\]
such that each path \(\vec{x}_i\) is flat and each point of concatenation is a neutral vertex. We will call such a decomposition a \textbf{maximal decomposition into flat subpaths} if \emph{every} neutral vertex visited by \(\vec{x}\) is a point of concatenation.
\end{defn}

In order to extend the function \(T\) in a {canonical} manner from flat to twistable paths, we need to specify a canonical decomposition of each twistable path into flat subpaths. The following lemma says that this is possible.

\begin{lem}\label{lem:max_decomp}
Every twistable path has a unique maximal decomposition into flat subpaths.
\end{lem}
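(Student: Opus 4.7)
The plan is to treat uniqueness and existence separately. Uniqueness is essentially a matter of unpacking the definition: in any maximal decomposition $\vec{x} = \vec{x}_0 * \vec{x}_1 * \cdots * \vec{x}_m$, each point of concatenation is required to be neutral, and \emph{every} neutral vertex visited by $\vec{x}$ at an interior index must appear as a point of concatenation. Hence the positions of the cuts are rigidly prescribed by the positions of the neutral vertices of $\vec{x}$, which leaves no freedom in the decomposition. This gives uniqueness with nothing further to prove.

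For existence, I would start from any decomposition $\vec{x} = \vec{x}_0 * \cdots * \vec{x}_m$ into flat subpaths with neutral cut points; such a decomposition exists by the hypothesis that $\vec{x}$ is twistable. If every neutral vertex of $\vec{x}$ is already a cut point, this decomposition is itself maximal. Otherwise, some flat piece $\vec{x}_i$ contains a neutral vertex $v$ at an interior position; split $\vec{x}_i$ at $v$ into two consecutive subpaths $\vec{x}_i'$ and $\vec{x}_i''$. Both remain flat, since being contained in $G \cup H_0$ or contained in $H$ is a property obviously inherited by subpaths, and the new cut point $v$ is neutral. Iterating this refinement terminates after finitely many steps because $\vec{x}$ is finite and each refinement strictly increases the number of subpaths. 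The final decomposition obtained has every neutral vertex as a cut point, so it is maximal.

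There is no real obstacle in the argument; the only point one might pause over is whether iteratively splitting at interior neutral vertices could produce a non-flat piece, and it cannot, because flatness is defined by containment in a fixed vertex set and is therefore preserved under taking subpaths. The rest is combinatorial bookkeeping.
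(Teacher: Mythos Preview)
Your proof is correct and follows essentially the same approach as the paper's: both arguments rely on the observation that flatness is inherited by subpaths, and both use an arbitrary flat decomposition (guaranteed by twistability) to certify that cutting at every neutral vertex yields flat pieces. The only cosmetic difference is that the paper cuts at all neutral vertices in one step and then compares against the given flat decomposition, whereas you refine the given decomposition iteratively; the underlying reasoning is the same.
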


\begin{proof}
Let \(\vec{x}\) be a twistable path in \(X\). If \(\vec{x}\) visits no neutral vertices then \(\vec{x}\) itself must be flat, and in this case there is nothing to show.  On the other hand, if \(\vec{x}\) visits at least one neutral vertex, we can decompose it as follows.

Extract the ordered list of neutral vertices visited by \(\vec{x}\)---say,
\[
(x_{i_0}, \ldots, x_{i_k}).
\]
Let \(\vec{x}_0\) be the subpath \((x_0,\ldots, x_{i_0})\) of \(\vec{x}\), let \(\vec{x}_k\) be the subpath \((x_{i_k}, \ldots, x_n)\), and for each \(0 < j < k\) let \(\vec{x}_j\) be the subpath \((x_{i_j}, \ldots, x_{i_{j+1}})\). Then
\begin{equation}\label{eq:max}
\vec{x} = \vec{x}_0 * \cdots * \vec{x}_k
\end{equation}
and in this decomposition each point of concatenation is neutral, while every neutral vertex visited by \(\vec{x}\) occurs as a point of concatenation. We just need to see that every subpath \(\vec{x}_j\) is flat.

As \(\vec{x}\) is twistable we can find \emph{some} decomposition of \(\vec{x}\) into {flat} subpaths:
\begin{equation}\label{eq:flat}
\vec{x} = \vec{x}'_0 * \cdots * \vec{x}'_m
\end{equation}
Since each point of concatenation in (\ref{eq:flat}) is neutral, each of the paths \(\vec{x}_0,\ldots, \vec{x}_k\) in (\ref{eq:max}) must be a subpath of \(\vec{x}'_i\) for some \(0 \leq i \leq m\). And since each subpath \(\vec{x}'_i\) in (\ref{eq:flat}) is flat---that is, contained either in \(G \cup H_0\) or in \(H\)---the same must be true for each \(\vec{x}_j\). Thus, (\ref{eq:max}) is a maximal decomposition into flat subpaths, and this decomposition is evidently unique as such.
\end{proof}

\Cref{lem:max_decomp} lets us extend the bijection \(T\) established in \Cref{lem:flat_biject} to a bijection between the twistable paths in \(X\) and those in \(Y\).

\begin{prop}\label{prop:nonstick_biject}
Suppose \(X\) and \(Y\) differ by a sycamore twist. Then for each \(k \geq 0\) there is a length-preserving bijection between twistable \(k\)-paths in \(X\) and twistable \(k\)-paths in \(Y\).
\end{prop}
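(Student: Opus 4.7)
The plan is to extend the length-preserving bijection $T$ from \Cref{lem:flat_biject} from flat paths to all twistable paths by applying it separately on each flat piece of the canonical decomposition provided by \Cref{lem:max_decomp}. Explicitly, given a twistable $k$-path $\vec{x}$ in $X$ with maximal decomposition into flat subpaths
\[\vec{x} = \vec{x}_0 * \vec{x}_1 * \cdots * \vec{x}_m,\]
I set
\[T(\vec{x}) = T(\vec{x}_0) * T(\vec{x}_1) * \cdots * T(\vec{x}_m),\]
where each $T(\vec{x}_i)$ is the flat path in $Y$ produced by the flat bijection of \Cref{lem:flat_biject}.

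The first thing to check is that the concatenation on the right-hand side actually makes sense, i.e.\ that for every $i < m$ the last vertex of $T(\vec{x}_i)$ coincides with the first vertex of $T(\vec{x}_{i+1})$. By the maximality of the decomposition, the common endpoint $v$ of $\vec{x}_i$ and $\vec{x}_{i+1}$ is neutral, hence belongs to $H \setminus K$ and is \emph{not} a gluing vertex. Inspecting the definitions of $\tau_G$ and $\tau_H$ in \Cref{sec:props_good_twist}, both functions act as the identity on non-gluing vertices, so $\tau_G(v) = \tau_H(v) = v$ regardless of which of the two cases in the definition of $T$ applies to each flat subpath. The concatenation is therefore well-defined and produces a genuine $k$-path in $Y$, and, since each point of concatenation is a neutral vertex and each $T(\vec{x}_i)$ is flat, the result is twistable. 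Length preservation follows immediately, because length sums over concatenation and each $T(\vec{x}_i)$ has the same length as $\vec{x}_i$ by \Cref{lem:flat_biject}.

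To show $T$ is a bijection, I construct a map $T'$ in the opposite direction by the exact same recipe, but using $\tau_G^{-1}$ and $\tau_H^{-1}$ in place of $\tau_G$ and $\tau_H$. To check that $T' \circ T$ and $T \circ T'$ are the identity, the only subtle point is that applying $T$ to a twistable path and then decomposing the result must recover the original maximal decomposition. This is true because the map $T$ takes neutral vertices to themselves (by the identity computation above), so the set of neutral vertices visited by $T(\vec{x})$ is identified with the set of neutral vertices visited by $\vec{x}$, and the points of concatenation therefore coincide. The main (small) obstacle in the argument is thus precisely this bookkeeping: confirming that maximal decompositions are preserved by $T$, which reduces to the observation that neutral vertices are fixed by both $\tau_G$ and $\tau_H$.
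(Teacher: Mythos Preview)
Your proof is correct and follows essentially the same approach as the paper: both extend the flat bijection of \Cref{lem:flat_biject} to twistable paths via the maximal decomposition of \Cref{lem:max_decomp}, using the fact that $\tau_G$ and $\tau_H$ fix neutral vertices to ensure the concatenations are well-defined and that the maximal decomposition is preserved. If anything, you are slightly more explicit than the paper about the bookkeeping needed to verify that $T$ and $T'$ are mutually inverse.
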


\begin{proof}
Given a twistable \(k\)-path \(\vec{x}\) in \(X\), let
\[\vec{x} = \vec{x}_0 * \cdots * \vec{x}_m\]
be its maximal decomposition into flat subpaths. Let \(T(\vec{x})\) be the \(k\)-path in \(Y\) defined by 
\begin{equation}\label{eq:nonstick_1}
T(\vec{x}) = T(\vec{x}_0) * \cdots * T(\vec{x}_m)
\end{equation}
where \(T(\vec{x}_i)\) is specified, for each flat subpath \(\vec{x}_i\), as in the proof of \Cref{lem:flat_biject}.

Since \(\tau_G\) preserves the vertex set \(G \cup H_0\) and \(\tau_H\) preserves the vertex set \(H\), and both maps fix the neutral vertices, (\ref{eq:nonstick_1}) is a decomposition of \(T(\vec{x})\) into flat subpaths and thus \(T(\vec{x})\) is twistable. Indeed, since all its neutral vertices are points of concatenation, (\ref{eq:nonstick_1}) is the maximal decomposition of \(T(\vec{x})\) into flat subpaths.

Moreover, we have
\[L(T(\vec{x})) = \sum_{i=0}^m L(T(\vec{x}_i)) = \sum_{i=0}^m L(\vec{x}_i) = L(\vec{x}),\]
which says that the map 
\[T: \{\text{twistable \(k\)-paths in \(X\)}\} \to \{\text{twistable \(k\)-paths in \(Y\)}\}\]
is length-preserving. We can construct map \(T'\) in the other direction by extending, in the same fashion, the function \(T'\) of \Cref{lem:flat_biject}. The maps \(T\) and \(T'\) are mutually inverse, proving the proposition.
\end{proof}

Having established a bijection between the twistable paths in \(X\) and those in \(Y\), our aim in Sections \ref{sec:mag_twists} and \ref{sec:excise} is to prove that every path which is \emph{not} twistable can be disregarded for the purposes of calculating magnitude. For this, we will need a more positive characterization of the `non-twistable' paths. The remainder of this section provides that characterization.

\begin{defn}
A path \(\vec{x}=(x_0, \ldots, x_k)\) in \(X\) or \(Y\) is said to be \textbf{sticky} if \(x_0\) is a vertex in \(G \backslash K\) and \(x_k\) is a vertex in \(H_*\), or \textit{vice versa}, and \(x_{1}, \ldots, x_{k-1}\) are all gluing vertices. 
\end{defn}

Since a sticky path always visits both \(G \backslash K\) and \(H_*\), no flat path can have a {subpath} which is sticky. A partial converse also holds: 

\begin{lem}\label{lem:flat_v_creased}
Let \(\vec{x}\) be a path which is not flat and which visits no neutral vertices except perhaps its endpoints. Then \(\vec{x}\) has a sticky subpath.
\end{lem}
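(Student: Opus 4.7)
The plan is to locate two vertices along \(\vec{x} = (x_0, \ldots, x_k)\)---one in \(G \backslash K\) and one in \(H_*\)---that are as close together as possible along the path, and then show that the subpath joining them is sticky.

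First, I would unpack what non-flatness says in terms of the partition of the vertex set of \(X\) (or \(Y\)) into the four disjoint pieces \((G \backslash K) \sqcup K \sqcup H_0 \sqcup H_*\). The failure of \(\vec{x}\) to be contained in \(G \cup H_0\) forces it to visit some vertex in \(H_*\), since \(H_*\) is the complement of \(G \cup H_0\); dually, the failure of \(\vec{x}\) to be contained in \(H\) forces it to visit some vertex in \(G \backslash K\). Hence the index sets \(A = \{i : x_i \in G \backslash K\}\) and \(B = \{i : x_i \in H_*\}\) are both nonempty.

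Next, I would pick \(i \in A\) and \(j \in B\) minimizing \(|j - i|\); by the symmetry in the definition of a sticky path we may assume \(i < j\). Minimality forces \(x_m \in K \cup H_0\) for every \(i < m < j\), since any such \(m\) lying in \(A \cup B\) would contradict the choice of the pair \((i,j)\). The key observation is then that any such \(m\) satisfies \(0 < m < k\), so it is strictly interior to \(\vec{x}\); by hypothesis the only neutral vertices visited by \(\vec{x}\) are among its endpoints, so \(x_m \notin H_0\), and therefore \(x_m \in K\). Hence \((x_i, x_{i+1}, \ldots, x_j)\) is a sticky subpath, the condition on intermediate gluing vertices being vacuously satisfied when \(j = i+1\).

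The only real obstacle is the bookkeeping around the endpoints: I need the minimizing indices \(i,j\) to place the intermediate indices \(m\) strictly between \(0\) and \(k\) so that the hypothesis on neutral vertices can be applied. The inequalities \(0 \le i < m < j \le k\) settle this immediately, so no further argument is required.
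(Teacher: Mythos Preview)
Your proposal is correct and follows essentially the same approach as the paper's own proof: both arguments locate an index pair $(i,j)$ with $x_i \in G\backslash K$ and $x_j \in H_*$ minimizing $|j-i|$, use minimality to force the intermediate vertices into $K \cup H_0$, and then invoke the hypothesis on neutral vertices to rule out $H_0$. Your write-up is slightly more explicit about the partition of the vertex set and about why the intermediate indices are strictly interior, but the underlying argument is identical.
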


\begin{proof}
As it is not flat, the path \(\vec{x} = (x_0, \ldots x_k)\) must visit at least one vertex in \(G \backslash K\) and at least one vertex in \(H_*\). Identify vertices \(x_{i} \in G \backslash K\) and \(x_j \in H_*\) in \(\vec{x}\) such that \(|j-i|\) is minimal. We may assume without loss of generality that \(i < j\). As \(|j-i|\) is minimal, the vertices \(x_{i+1},\ldots,x_{j-1}\) must all belong to \(K \cup H_0\); in fact, as \(\vec{x}\) visits no neutral vertices (except perhaps its endpoints), they must all belong to \(K\). Thus, the subpath \((x_i, \ldots x_j)\) is sticky.
\end{proof}

Using the lemma, we can characterize the `non-twistable' paths in \(X\) and \(Y\) as those which possess at least one sticky subpath.

\begin{prop}\label{prop:exc_v_twist}
A path is twistable if and only if has no sticky subpath.
\end{prop}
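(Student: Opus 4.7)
My plan is to prove both implications of the biconditional, invoking the maximal decomposition (\Cref{lem:max_decomp}) for the forward direction and \Cref{lem:flat_v_creased} for the reverse direction. The key observation throughout is that the sets $G \setminus K$, $H_*$, and $H_0$ are pairwise disjoint subsets of the vertex set of $X$ (and of $Y$), and that no flat subpath can contain vertices of both $G\setminus K$ and $H_*$.

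For the forward direction, suppose $\vec{x}$ is twistable and consider its maximal decomposition $\vec{x} = \vec{x}_0 * \cdots * \vec{x}_m$ into flat subpaths. Assume for contradiction that $\vec{x}$ has a sticky subpath $\vec{s} = (s_0, \ldots, s_n)$. By definition, $s_0$ and $s_n$ lie in $G \setminus K$ and $H_*$ in some order, while $s_1, \ldots, s_{n-1}$ all lie in $K$. In particular, no vertex of $\vec{s}$ is neutral: the interior vertices lie in $K$, while the endpoints lie in $G\setminus K$ or $H_*$, none of which meet $H_0$. Since every concatenation point of the maximal decomposition is a neutral vertex, $\vec{s}$ contains no concatenation point (neither in its interior nor at its endpoints), so $\vec{s}$ must lie entirely within a single flat subpath $\vec{x}_i$. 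But a flat subpath is contained either in $G \cup H_0$ or in $H$, and neither of these sets contains both $G \setminus K$ and $H_*$; this contradicts the fact that $\vec{s}$ has one endpoint in each.

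For the reverse direction, suppose $\vec{x} = (x_0, \ldots, x_n)$ contains no sticky subpath. If $\vec{x}$ visits no neutral vertices, then by \Cref{lem:flat_v_creased} it is itself flat, hence trivially twistable. Otherwise, let $x_{i_0}, \ldots, x_{i_k}$ be the neutral vertices of $\vec{x}$ listed in order, and define subpaths $\vec{x}_0 = (x_0, \ldots, x_{i_0})$, $\vec{x}_j = (x_{i_{j-1}}, \ldots, x_{i_j})$ for $1 \leq j \leq k$, and $\vec{x}_{k+1} = (x_{i_k}, \ldots, x_n)$, so that $\vec{x} = \vec{x}_0 * \vec{x}_1 * \cdots * \vec{x}_{k+1}$ and every point of concatenation is neutral. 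Each subpath $\vec{x}_j$ visits no neutral vertices except possibly its endpoints, so by \Cref{lem:flat_v_creased} either $\vec{x}_j$ is flat or it contains a sticky subpath. A sticky subpath of $\vec{x}_j$ would in turn be a sticky subpath of $\vec{x}$, contradicting our hypothesis. Hence each $\vec{x}_j$ is flat, and the given decomposition exhibits $\vec{x}$ as twistable.

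I expect no real obstacle here; the proof is essentially bookkeeping around the definitions, with all the substantive content already packaged into \Cref{lem:flat_v_creased} and \Cref{lem:max_decomp}. The one subtlety worth highlighting cleanly is the disjointness argument used in the forward direction: a sticky subpath's endpoints live in disjoint subsets $G\setminus K$ and $H_*$, yet its interior is entirely in $K$, so it is simultaneously forced to cross between two flat pieces (impossible because no single flat piece can house both endpoints) and prohibited from doing so (because it contains no neutral vertex to serve as a concatenation point).
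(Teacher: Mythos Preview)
Your proof is correct and follows essentially the same approach as the paper's: both directions hinge on the observation that a sticky subpath contains no neutral vertex and therefore must sit inside a single flat piece of any decomposition, together with \Cref{lem:flat_v_creased} to certify flatness of the pieces in the decomposition at neutral vertices. The only cosmetic difference is that you invoke the maximal decomposition via \Cref{lem:max_decomp} in the forward direction, whereas the paper uses an arbitrary decomposition into flat subpaths; since concatenation points are neutral in either case, this makes no difference to the argument.
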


\begin{proof}
First, suppose \(\vec{x}\) is a twistable path, and let \(\vec{x} = \vec{x}_0 * \cdots * \vec{x}_k\) be a decomposition into flat subpaths. If \(\vec{x}\) were to have a sticky subpath, say \(\vec{w}\), then---as a sticky path visits no neutral vertices---\(\vec{w}\) must be a subpath of \(\vec{x}_i\) for some \(0 \leq i \leq k\). But no flat path can have a sticky subpath; thus, \(\vec{x}\) has no sticky subpath.

Conversely, suppose \(\vec{x}\) has no sticky subpath. If \(\vec{x}\) visits no neutral vertices, then \Cref{lem:flat_v_creased} implies that \(\vec{x}\) must be flat. On the other hand, if \(\vec{x}\) visits at least one neutral vertex, we can decompose it by concatenating at every neutral vertex as in \Cref{lem:max_decomp} (equation (\ref{eq:max})). To see that every subpath \(\vec{x}_j\) in this decomposition is flat, observe that for each \(j\) we either have \(\vec{x}_j = (x_{j_0}, x_{j_1})\) where \(x_{j_0}\) and \(x_{j_1}\) are both neutral, or else \(\vec{x}_j = (x_{j_0}, \ldots, x_{j_m})\) where \(m > 1\) and none of the vertices \(x_{j_1}, \ldots, x_{j_{m-1}}\) is neutral. In the first case \(\vec{x}_j\) is contained in \(H_0\), so is flat. In the second case, \(\vec{x}_j\) is flat by \Cref{lem:flat_v_creased}.

Thus we have a decomposition into flat subpaths, so \(\vec{x}\) is twistable.
\end{proof}

%%%%%%%%%%%%%%%%%%%%%%%%%%%%

\section{Magnitude is invariant under sycamore twists}\label{sec:mag_twists}

Having established that the twistable paths of a given length in \(X\) correspond bijectively with those in \(Y\), we would like to be able to disregard the paths that are not twistable; by \Cref{prop:exc_v_twist}, the non-twistable paths are those which possess at least one sticky subpath.

Ideally, one might hope to show that the non-twistable paths span an acyclic subcomplex of the magnitude complex and thus do not contribute to its homology---or, therefore, to its Euler characteristic. The trouble is that the non-twistable paths \emph{do not} span a subcomplex of \(MC(X)\) or \(MC(Y)\), acyclic or otherwise. A sticky subpath can easily be destroyed, for instance if removing one of its endpoints brings its gluing vertices into contact with a neutral vertex; \Cref{fig:not_subcx} gives an example. Consequently, the subset of generators which contain a sticky subpath is not closed under the boundary operator.

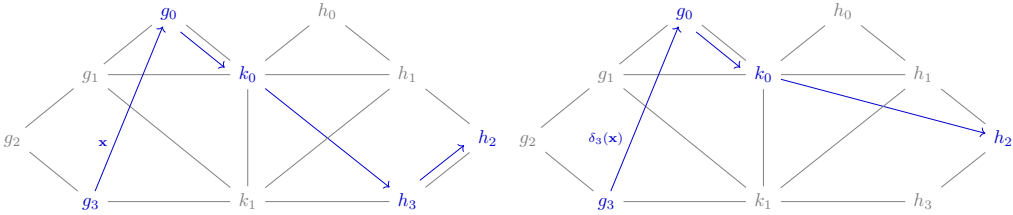
\begin{figure}[h]
\adjustbox{scale=0.7, center}{
\begin{tikzcd}
& & \color{blue}{g_0} \arrow[gray, no head]{dl} \arrow[blue, shift right=2]{dr} \arrow[gray, no head]{dr} & & \color{gray}{h_0} \arrow[gray, no head]{dl} \arrow[gray, no head]{dr}\\
& \color{gray}{g_1} \arrow[gray, no head]{rr} \arrow[gray, no head]{ddrr} & & \color{blue}{k_0}  \arrow[gray, no head]{dd} \arrow[gray, no head]{rr} \arrow[blue]{ddrr} & & \color{gray}{h_1} \\
\color{gray}{g_2} \arrow[gray, no head]{ur}  \arrow[gray, no head]{dr} &&&&&& \color{blue}{h_2} \arrow[gray, no head]{ul}  \arrow[gray, no head]{dl} \\
& \color{blue}{g_3} \arrow[gray, no head]{rr}  \arrow[blue, near start]{uuur}{\vec{x}} & & \color{gray}{k_1}  \arrow[gray, no head]{rr}  \arrow[gray, no head]{uurr} & & \color{blue}{h_3} \arrow[blue, shift left=2]{ur}
\end{tikzcd}
\begin{tikzcd}
& & \color{blue}{g_0} \arrow[gray, no head]{dl} \arrow[blue, shift right=2]{dr} \arrow[gray, no head]{dr} & & \color{gray}{h_0} \arrow[gray, no head]{dl} \arrow[gray, no head]{dr}\\
& \color{gray}{g_1} \arrow[gray, no head]{rr} \arrow[gray, no head]{ddrr} & & \color{blue}{k_0}  \arrow[gray, no head]{dd} \arrow[gray, no head]{rr} \arrow[blue]{drrr} & & \color{gray}{h_1} \\
\color{gray}{g_2} \arrow[gray, no head]{ur}  \arrow[gray, no head]{dr} &&&&&& \color{blue}{h_2} \arrow[gray, no head]{ul}  \arrow[gray, no head]{dl} \\
& \color{blue}{g_3} \arrow[gray, no head]{rr}  \arrow[blue, near start]{uuur}{\delta_3(\vec{x})} & & \color{gray}{k_1}  \arrow[gray, no head]{rr}  \arrow[gray, no head]{uurr} & & \color{gray}{h_3}
\end{tikzcd}
}
\caption{Here, in blue, we see two paths in the graph \(X\) from \Cref{fig:whit_1}. In the path \(\vec{x}\) on the left there is a sticky subpath, \((g_0, k_0, h_3)\). On the right is the path \(\delta_3(\vec{x})\), which is flat: it is contained in \(G \cup H_0\).}\label{fig:not_subcx}
\end{figure}

To circumvent this problem we are going to filter the magnitude complex in such a way that, when we consider any term-by-term subquotient of the filtration, the non-twistable paths \emph{do} span a subcomplex. That subcomplex will turn out to be acyclic. The proof of its acyclicity, which is inspired by Hepworth and Willerton's proof of the excision theorem for magnitude chains (\cite{HepworthWillerton2017}, Theorem 28), is given in \Cref{sec:excise}.

The idea behind the filtration is as follows. Since a sticky subpath may be destroyed by removing one of its endpoints, and since the endpoints of a sticky subpath always belong to the vertex set \(X \backslash (H_0 \cup K)\), we would like to engineer things so that removing a vertex in \(X \backslash (H_0 \cup K)\) from a generator always has the effect of sending that generator to zero. To achieve this, we filter the magnitude chains by the number of vertices they visit in \(X \backslash (H_0 \cup K)\).

Formally, for each \(m \in \bb{N}\), let \(T_m^X\) denote the subcomplex of \(MC(X)\) spanned in degree \(k \in \bb{N}\) by \(k\)-paths that visit at most \(m\) vertices in \(X \backslash (H_0 \cup K)\). We can filter \(MC(X)\) by these subcomplexes---
\begin{equation}\label{eq:T_filt}
0 \subseteq T_0^X \subseteq T_1^X \subseteq \cdots T_m^X \subseteq \cdots \subseteq MC(X)
\end{equation}
---and we can filter \(MC(Y)\) in a similar manner. We have \(T_{m,k}^X = MC_k(X)\) for all \(k < m\), and the same for \(Y\). What's more, in any given length grading \(\ell\) the magnitude chain complex vanishes above homological degree \(k=\ell\) (since any \(k\)-path in a graph has length at least \(k\)), so in every length grading \(\ell\) this filtration stabilizes after the \(\ell^\th\) term.

For each \(m\), write \(Q_m^X = T_m^X / T_{m-1}^X\). The complex \(Q_m^X\) is freely generated in degree \(k\) by paths \(\vec{x} = (x_0, \ldots, x_k)\) that visit \textit{exactly} \(m\) vertices in \(X \backslash (H_0 \cup K)\). The boundary map is \(d = \sum_{i=1}^{k-1} (-1)^i \delta_i\) where \(\delta_i\) removes the vertex \(x_i\) unless \(x_i\) belongs to \(X \backslash (H_0 \cup K)\) or removing it reduces the length of the path; in either of those cases, \(\delta_i(\vec{x}) = 0\).

\begin{lem}\label{lem:ex_sub}
For each \(m \in \bb{N}\), the set of non-twistable paths in \(X\) which visit exactly \(m\) vertices in \(X \backslash (H_0 \cup K)\) spans a subcomplex of \(Q_m^X\).
\end{lem}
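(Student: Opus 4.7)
The plan is to show that applying any face map $\delta_r$ to a non-twistable generator of $Q_m^X$ yields either zero or another non-twistable generator. By \Cref{prop:exc_v_twist}, a non-twistable path is precisely one containing a sticky subpath, so I would fix a generator $\vec{x} = (x_0, \ldots, x_k)$ of $Q_m^X$ together with a sticky subpath $\vec{w} = (x_i, \ldots, x_j)$ with (say) $x_i \in G \backslash K$, $x_j \in H_*$ and $x_{i+1}, \ldots, x_{j-1} \in K$. A preliminary observation takes care of the degree grading: if $\delta_r(\vec{x}) \neq 0$ in $Q_m^X$ then $x_r \in H_0 \cup K$, so deleting it does not change the number of vertices visited in $X \backslash (H_0 \cup K)$, and $\delta_r(\vec{x})$ remains a generator of $Q_m^X$.

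The main step is a case analysis on $r$. Since both $x_i \in G \backslash K$ and $x_j \in H_*$ lie in $X \backslash (H_0 \cup K)$, the definition of the boundary on $Q_m^X$ guarantees that $\delta_i(\vec{x}) = \delta_j(\vec{x}) = 0$. For any other index $r$ with $\delta_r(\vec{x}) \neq 0$, there are two remaining possibilities. If $r < i$ or $r > j$, then $\vec{w}$ survives unchanged as a subpath of $\delta_r(\vec{x})$, which is therefore still non-twistable. If $i < r < j$, then $x_r$ is an interior vertex of $\vec{w}$ and hence a gluing vertex; the substring $(x_i, \ldots, \widehat{x_r}, \ldots, x_j)$ of $\delta_r(\vec{x})$ still has endpoints in $G \backslash K$ and $H_*$ with all intermediate vertices in $K$, so is itself sticky. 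Either way, $\delta_r(\vec{x})$ contains a sticky subpath and is non-twistable.

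The argument is elementary once the definitions are unpacked; the genuine content, foreshadowed in the paragraphs preceding the lemma, is that $Q_m^X$ is engineered so that removing a vertex outside $H_0 \cup K$ produces zero in the subquotient. This design precisely annihilates the boundary terms $\delta_i$ and $\delta_j$ that would otherwise destroy the sticky subpath by deleting one of its endpoints (compare \Cref{fig:not_subcx}), and is what makes the subcomplex property hold in $Q_m^X$ even though it fails in $MC(X)$ itself. There is no serious obstacle to overcome; the only point requiring care is to verify that the remaining sticky substring in the case $i<r<j$ still meets all three conditions of the definition, for which the key fact is that interior vertices of a sticky path are automatically in $K$.
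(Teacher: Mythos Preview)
Your proof is correct and follows essentially the same approach as the paper's own argument: fix a sticky subpath $(x_i,\ldots,x_j)$, observe that the endpoints $x_i$ and $x_j$ lie in $X\backslash(H_0\cup K)$ so $\delta_i$ and $\delta_j$ vanish in $Q_m^X$, and verify that removing any other vertex either leaves the sticky subpath intact ($r<i$ or $r>j$) or shortens it to another sticky subpath ($i<r<j$). Your preliminary observation about the degree grading and your closing commentary are helpful glosses, but the mathematical content matches the paper's proof exactly.
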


\begin{proof}
Let \(\vec{x} = (x_0, \ldots, x_k)\) be a generator of \(Q_m^X\) in degree \(k\). By \Cref{prop:exc_v_twist}, \(\vec{x}\) has at least one sticky subpath; let \((x_i, \ldots, x_j)\) be a sticky subpath.

Removing any of the vertices \(x_p\) for \(p < i\) or \(p > j\) will return a path still containing the sticky subpath \((x_i, \ldots, x_j)\), provided it does not send the generator to zero. Meanwhile, removing any of the vertices \(x_{i+1}, \ldots, x_{j-1}\) will return a sticky subpath with one fewer vertex, unless it sends the generator to zero. And since the endpoints \(x_i\) and \(x_j\) belong to \(X \backslash (H_0 \cup K)\), removing either one of them will certainly send the generator to zero. Thus, for every \(p \in \{1, \ldots, k-1\}\), either \(\delta_p(\vec{x})\) has a sticky subpath or \(\delta_p(\vec{x}) = 0\).
\end{proof}

Exactly same argument proves the same lemma for \(Y\):

\begin{lem}\label{lem:ex_sub_Y}
For each \(m \in \bb{N}\), the set of non-twistable paths in \(Y\) which visit exactly \(m\) vertices in \(Y \backslash (H_0 \cup K)\) spans a subcomplex of \(Q_m^Y\). \qed
\end{lem}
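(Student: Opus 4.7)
The proof strategy is to repeat the argument given for Lemma \ref{lem:ex_sub} verbatim, since all the ingredients were stated symmetrically for both $X$ and $Y$. The notion of sticky path is defined uniformly for paths in $X$ or $Y$, the characterization of non-twistable paths as those containing a sticky subpath (Proposition \ref{prop:exc_v_twist}) applies to both, and the filtration $T_\bullet^Y \subseteq MC(Y)$ with subquotients $Q_m^Y$ is constructed by replacing $X$ with $Y$ throughout. Hence no genuinely new work is required; the task is only to verify that the case analysis of what $\delta_p$ does to a generator of $Q_m^Y$ is insensitive to whether the ambient graph is $X$ or $Y$.

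Concretely, the plan is as follows. Let $\vec{x} = (x_0,\ldots,x_k)$ be a non-twistable generator of $Q_m^Y$ in homological degree $k$. By Proposition \ref{prop:exc_v_twist}, $\vec{x}$ has a sticky subpath $(x_i,\ldots,x_j)$, whose endpoints $x_i$ and $x_j$ lie in $Y\setminus(H_0 \cup K)$ by definition of stickiness. I would then check three cases for the face $\delta_p(\vec{x})$ with $1 \le p \le k-1$: (i)~if $p < i$ or $p > j$, then either $\delta_p(\vec{x}) = 0$ or the sticky subpath $(x_i,\ldots,x_j)$ survives intact; (ii)~if $i < p < j$, then either $\delta_p(\vec{x}) = 0$ or the resulting path has a shorter sticky subpath with the same endpoints $x_i, x_j$, using that the interior vertices of a sticky subpath are gluing vertices so removing one leaves a path still routed through $K$ between $x_i$ and $x_j$; (iii)~if $p = i$ or $p = j$, then since $x_i, x_j \in Y\setminus(H_0\cup K)$, removing this vertex would yield a path visiting only $m-1$ vertices in $Y\setminus(H_0\cup K)$, which is zero in $Q_m^Y$.

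In every case $\delta_p(\vec{x})$ is either zero or again a non-twistable generator, so the differential preserves the span of non-twistable generators. The only point worth a moment's thought is case (ii)---one must note that a sticky subpath with its interior vertices shortened remains sticky by the definition (its interior vertices are a subset of $K$, and its endpoints are unchanged in $G\setminus K$ and $H_*$)---but this is immediate from the definition of a sticky path and does not invoke any metric property specific to $X$ versus $Y$. I therefore expect no obstacle, and the one-line ``same argument'' proof given in the paper is fully justified.
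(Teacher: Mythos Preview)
Your proposal is correct and is exactly the approach the paper takes: it states that ``exactly the same argument'' as for Lemma~\ref{lem:ex_sub} proves the result for $Y$, and your case analysis reproduces that argument verbatim with $Y$ in place of $X$. No additional ideas are needed.
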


At heart of the main theorem is the following proposition, which says that the subcomplexes spanned by the non-twistable paths are acyclic. The proof is given in \Cref{sec:excise}.

\begin{prop}\label{prop:ex_acyc}
Let \(E_m^X\) denote the subcomplex of \(Q_m^X\) spanned by non-twistable paths, and define \(E_m^Y\) similarly for \(Y\). For every \(m \in \bb{N}\), the complexes \(E_m^X\) and \(E_m^Y\) are both acyclic. \qed
\end{prop}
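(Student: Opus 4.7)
The plan is to prove \Cref{prop:ex_acyc} by producing an explicit degree-$+1$ chain contraction $s \colon E_m^X \to E_m^{X}$ satisfying $ds + sd = \mathrm{id}_{E_m^X}$; the argument for $E_m^Y$ is identical after replacing the projection $\pi\colon H_*\to K$ with $\alpha^{-1}\circ\pi$ and invoking the $Y$-distance formula of \Cref{lem:dist_in_twist}. The key enabling fact is \Cref{lem:proj_in_twist}: every biased vertex $h \in H_*$ projects in $X$ to $G$ via the same point $\pi(h) \in K$, so that $\pi(h)$ lies between $h$ and every vertex of $G$ in $X$.

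Given a generator $\vec{x} = (x_0, \ldots, x_k)$ of $E_m^X$, pick the leftmost sticky subpath $(x_i, \ldots, x_j)$ (the one of minimal starting index $i$). By the definition of a sticky subpath, one endpoint lies in $G \setminus K$ and the other in $H_*$; write $h_\ast$ for the $H_*$-endpoint, let $a \in \{i,j\}$ denote its position in $\vec{x}$, and let $b$ denote the adjacent position within the sticky subpath (so $b = a - 1$ if $a = j$ and $b = a + 1$ if $a = i$). Set $p := \pi(h_\ast) \in K$ and define
\[
s(\vec{x}) \;=\; \begin{cases} 0 & \text{if } x_b = p, \\ \epsilon \cdot \vec{x}^{+} & \text{otherwise}, \end{cases}
\]
where $\vec{x}^{+}$ is the path obtained from $\vec{x}$ by inserting $p$ between positions $b$ and $a$, and $\epsilon \in \{\pm 1\}$ is the sign for which the face map of $\vec{x}^{+}$ that removes the inserted $p$ returns exactly $+\vec{x}$. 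First I would check that $s$ lands in $E_m^X$: non-degeneracy of $\vec{x}^{+}$ follows from $p \neq x_b$ (by the case assumption) and $p \neq h_\ast$ (since $h_\ast \notin K$); length is preserved by the projection identity $d_X(x_b, h_\ast) = d_X(x_b, p) + d_X(p, h_\ast)$ from \Cref{lem:proj_in_twist}; the count of vertices in $X \setminus (H_0 \cup K)$ is unchanged because $p \in K$; and the augmented sticky subpath $(x_i,\ldots,x_{a-1},p,x_a,\ldots)$ keeps $\vec{x}^{+}$ non-twistable.

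The crux of the argument is the verification of $ds + sd = \mathrm{id}$. For any generator $\vec{x}$ with $s(\vec{x}) \neq 0$, the face map on $\vec{x}^{+}$ that removes the inserted $p$ contributes exactly $\vec{x}$ to $d(s(\vec{x}))$, by choice of $\epsilon$. Conversely, for any generator $\vec{x}$ with $x_b = p$ (so $s(\vec{x}) = 0$), the face $\delta_b(\vec{x})$ deletes the gluing vertex equal to the projection and produces a path $\vec{x}^{-}$ with $s(\vec{x}^{-}) = \pm\vec{x}$; this contributes $+\vec{x}$ to $s(d(\vec{x}))$. All remaining contributions to $d(s(\vec{x})) + s(d(\vec{x}))$ should cancel in pairs, via the standard kind of matching $\delta_l \circ s \leftrightarrow s \circ \delta_{l'}$ mediated by the simplicial identities $\delta_l \delta_{l'} = \delta_{l'-1}\delta_l$ for $l < l'$.

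The main obstacle will be the bookkeeping of this pairwise cancellation. Concretely, one must verify that face maps acting far from the leftmost sticky subpath commute (up to sign) with $s$; that face maps removing an endpoint of the sticky subpath vanish in $Q_m^X$ because those vertices lie in $X \setminus (H_0 \cup K)$; that face maps acting within the sticky subpath either reduce the length (and so vanish) or produce a path whose leftmost sticky subpath is still witnessed by $h_\ast$ so that $s$ acts by the same kind of insertion; and that the alternating signs work out. This term-by-term analysis is in the spirit of Hepworth and Willerton's proof of the excision theorem (\cite{HepworthWillerton2017}, Theorem 28); once completed, the contracting homotopy $s$ yields the acyclicity of $E_m^X$, and the analogous construction with $p := \alpha^{-1}(\pi(h_\ast))$ does the same for $E_m^Y$.
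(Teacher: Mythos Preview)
Your core idea---inserting the projection \(\pi(h_\ast)\) of the \(H_*\)-endpoint of the first sticky subpath---is exactly the mechanism the paper uses. The difference is that you try to make \(s\) into a contracting homotopy on all of \(E_m^X\) at once, whereas the paper first filters \(E_m^X\) by the position of the relevant endpoint of the first sticky subpath and then contracts each associated graded piece. That extra filtration is not cosmetic: your direct approach does not yield \(ds+sd=\mathrm{id}\).

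The failure occurs in the step you flag as ``face maps acting far from the leftmost sticky subpath commute (up to sign) with \(s\)''. Removing a \emph{neutral} vertex that sits to the left of the first sticky subpath can manufacture a brand-new sticky subpath with a different \(H_*\)-endpoint (even crossing in the opposite direction), so \(s\circ\delta_l\) and \(\delta_{l'}\circ s\) insert \emph{different} projections at \emph{different} positions. Concretely, take \(K=\{k_1,k_3,k_2\}\) a path of length \(2\); in \(H\) let \(h_*\) be adjacent only to \(k_1\) and \(h_0\) adjacent to \(k_1\) and \(k_2\); in \(G\) let \(g\) be adjacent only to \(k_2\) and \(g'\) only to \(k_1\). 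Then \(\vec{x}=(g,h_0,h_*,g')\) has unique sticky subpath \((h_*,g')\), while \(\delta_1(\vec{x})=(g,h_*,g')\) is nonzero in \(Q_m^X\) (one checks \(d_X(g,h_*)=4=d_X(g,h_0)+d_X(h_0,h_*)\)) and has \emph{first} sticky subpath \((g,h_*)\). Computing directly,
\[
ds(\vec{x})+sd(\vec{x})=\vec{x}+(g,h_*,k_1,g')+(g,k_1,h_*,g'),
\]
and the two surplus generators are distinct, so they do not cancel. The paper's filtration by \(F_{\vec{x}}\) (respectively \(L_{\vec{x}}\)) is there precisely to force \(\delta_l\) for \(l\le i\) to vanish in the quotient, eliminating exactly this obstruction; once that is in place, your insertion map becomes the contracting homotopy on each graded piece.
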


\begin{cor}\label{cor:same_ec}
In each length grading \(\ell\) we have \(\chi(Q_m^{X, \ell}) = \chi(Q_m^{Y,\ell})\).
\end{cor}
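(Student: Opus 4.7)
The plan is to extract the corollary from \Cref{prop:ex_acyc} together with the bijection established in \Cref{prop:nonstick_biject}. The key mechanism is the short exact sequence of chain complexes
\[0 \to E_m^X \to Q_m^X \to Q_m^X / E_m^X \to 0\]
provided by \Cref{lem:ex_sub}. In each length grading \(\ell\), all three complexes are bounded (vanishing in homological degrees \(k > \ell\)) and finitely generated, so Euler characteristics are defined and behave additively in short exact sequences. Since \(E_m^X\) is acyclic by \Cref{prop:ex_acyc}, its Euler characteristic vanishes in every length grading, and hence
\[\chi(Q_m^{X,\ell}) = \chi\bigl((Q_m^X/E_m^X)^\ell\bigr).\]
The same reasoning applies to \(Y\), so it suffices to show the two quotient complexes have the same Euler characteristic in each length grading.

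Next I would identify the quotient \(Q_m^X/E_m^X\) explicitly: it is freely generated in homological degree \(k\) by the \emph{twistable} \(k\)-paths in \(X\) which visit exactly \(m\) vertices in \(X \setminus (H_0 \cup K)\). To conclude, I would compare the generators of \((Q_m^X/E_m^X)^\ell\) and \((Q_m^Y/E_m^Y)^\ell\) using the bijection \(T\) of \Cref{prop:nonstick_biject}. That bijection already preserves length and homological degree, so the only remaining point is that it also preserves the count of vertices lying in \(X \setminus (H_0 \cup K)\), respectively \(Y \setminus (H_0 \cup K)\). This follows directly from the definitions of \(\tau_G\) and \(\tau_H\): \(\tau_G\) is the identity on vertices, while \(\tau_H\) fixes every non-gluing vertex and merely permutes the vertices of \(K\) via \(\alpha^{-1}\). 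Consequently, both maps send vertices in \(G \setminus K\) and in \(H_*\) to vertices of the same type, and send gluing vertices to gluing vertices, so the number of vertices in \((G \setminus K) \cup H_* = X \setminus (H_0 \cup K)\) visited by a path is preserved.

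Granted this observation, \(T\) restricts to a bijection between the generating sets of \((Q_m^X/E_m^X)^\ell\) and \((Q_m^Y/E_m^Y)^\ell\) in every homological degree \(k\), so the two graded abelian groups are isomorphic as bigraded sets of generators and in particular have equal ranks in each degree. Therefore their Euler characteristics in length grading \(\ell\) coincide, and combining with the acyclicity step above gives \(\chi(Q_m^{X,\ell}) = \chi(Q_m^{Y,\ell})\).

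The only subtlety worth flagging is that \(T\) is a bijection of generating \emph{sets} rather than a chain map; no claim about chain-level or homological compatibility between \(Q_m^X/E_m^X\) and \(Q_m^Y/E_m^Y\) is needed, which is fortunate because such a compatibility is precisely what is not expected to hold (and why the main theorem does not categorify to an isomorphism of magnitude homologies). Since the Euler characteristic of a finitely generated bounded complex depends only on the ranks of its chain groups, the bijection on generators is sufficient, and the argument is complete modulo \Cref{prop:ex_acyc}.
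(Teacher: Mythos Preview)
Your argument is correct and follows essentially the same approach as the paper: use the acyclicity of \(E_m^X\) from \Cref{prop:ex_acyc} to reduce \(\chi(Q_m^{X,\ell})\) to an alternating count of twistable paths visiting exactly \(m\) vertices outside \(H_0 \cup K\), and then invoke the bijection \(T\) of \Cref{prop:nonstick_biject} together with the observation that \(\tau_G\) and \(\tau_H\) each preserve the partition of vertices into \(K\), \(H_0\), and their complement. The only difference is cosmetic---you phrase the first step via the short exact sequence and the quotient complex, whereas the paper subtracts Euler characteristics directly---but the content is identical.
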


\begin{proof}
\Cref{prop:ex_acyc} tells us that for each \(m\) and \(\ell\) the subcomplex \(E_m^{X,\ell}\) of \(Q_m^{X,\ell}\) is acyclic; consequently, its Euler characteristic vanishes. This lets us write
\[\chi(Q_m^{X,\ell}) = \chi(Q_m^{X,\ell}) - \chi(E_m^{X,\ell})\]
and thus, counting generators,
\begin{align}
\chi(Q_m^{X,\ell}) = & \sum_{k=0}^\infty (-1)^k \#
\left\{ \begin{array}{c}
\text{\(k\)-paths of length \(\ell\) in \(X\) which visit} \\
\text{exactly \(m\) vertices in \(X \backslash (H_0 \cup K)\)}
\end{array}\right\} \nonumber
\\ & - \sum_{k=0}^\infty (-1)^k \#
\left\{ \begin{array}{c}
\text{non-twistable \(k\)-paths of length \(\ell\) in \(X\) which} \\
\text{visit exactly \(m\) vertices in \(X \backslash (H_0 \cup K)\)} 
\end{array}\right\} \nonumber \\
= & \sum_{k=0}^\infty (-1)^k \#
\left\{ \begin{array}{c}
\text{twistable \(k\)-paths of length \(\ell\) in \(X\) which} \\
\text{visit exactly \(m\) vertices in \(X \backslash (H_0 \cup K)\)}
\end{array}\right\}.\label{cor:same_ec_1}
\end{align}
By the same reasoning,
\begin{align}
\chi(Q_m^{Y,\ell}) = & \sum_{k=0}^\infty (-1)^k \#
\left\{ \begin{array}{c}
\text{twistable \(k\)-paths of length \(\ell\) in \(Y\) which} \\
\text{visit exactly \(m\) vertices in \(Y \backslash (H_0 \cup K)\)}.
\end{array}\right\}.\label{cor:same_ec_2}
\end{align}
Since \(\tau_G\) and \(\tau_H\) each fix every vertex outside \(K\) and map \(K\) to itself, both functions preserve the vertex set \(H_0 \cup K\) and its complement. Consequently, the bijection 
\[T: \{\text{twistable \(k\)-paths in \(X\)}\} \to \{\text{twistable \(k\)-paths in \(Y\)}\}\]
constructed in \Cref{prop:nonstick_biject} preserves, as well as the length of a path, the number of vertices it visits outside \(H_0 \cup K\). Using \(T\) to compare each summand in (\ref{cor:same_ec_1}) and (\ref{cor:same_ec_2}), we can conclude that \(\chi(Q_m^{X, \ell}) = \chi(Q_m^{Y,\ell})\).
\end{proof}

We are now equipped to prove the main theorem.

\begin{thm}\label{thm:gt_mag}
Let \(X\) and \(Y\) be graphs which differ by a sycamore twist. Then
\[\Mag(X) = \Mag(Y).\]
\end{thm}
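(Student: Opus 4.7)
The plan is simply to assemble the pieces that the excerpt has already prepared. By \Cref{thm:MH_mag}, proving $\Mag(X) = \Mag(Y)$ reduces to showing that $\chi(MC_\bullet^\ell(X)) = \chi(MC_\bullet^\ell(Y))$ for every length grading $\ell \in \bb{N}$, so I would work one $\ell$ at a time.

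For fixed $\ell$, I would restrict the filtration \eqref{eq:T_filt} to length grading $\ell$. Because a non-degenerate $k$-path has length at least $k$ (\Cref{rmk:count}), the complex $MC_\bullet^\ell(X)$ is concentrated in homological degrees $k \leq \ell$ and is finitely generated in each such degree; consequently the filtration stabilizes after finitely many steps and each subquotient $Q_m^{X,\ell}$ has a well-defined Euler characteristic that vanishes for large $m$. Additivity of Euler characteristic along the short exact sequences of chain complexes
\[
0 \to T_{m-1}^{X,\ell} \to T_m^{X,\ell} \to Q_m^{X,\ell} \to 0
\]
then gives $\chi(MC_\bullet^\ell(X)) = \sum_{m} \chi(Q_m^{X,\ell})$, with the analogous identity holding for $Y$.

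Finally I would invoke \Cref{cor:same_ec}, which matches $\chi(Q_m^{X,\ell})$ with $\chi(Q_m^{Y,\ell})$ term by term; summing over $m$ yields $\chi(MC_\bullet^\ell(X)) = \chi(MC_\bullet^\ell(Y))$, and summing once more over $\ell$ and reassembling via \Cref{thm:MH_mag} proves the theorem. Since all the substantive work (the length-preserving bijection on twistable paths in \Cref{prop:nonstick_biject}, the acyclicity of the non-twistable subcomplexes in \Cref{prop:ex_acyc}, and the generator count in \Cref{cor:same_ec}) has been done upstream, there is no genuine obstacle remaining at this stage: the proof is a short piece of bookkeeping that combines the filtration with the additivity of Euler characteristic.
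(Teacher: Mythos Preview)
Your proposal is correct and follows essentially the same route as the paper: reduce via \Cref{thm:MH_mag} to comparing $\chi(MC_\bullet^\ell(X))$ with $\chi(MC_\bullet^\ell(Y))$, use the filtration \eqref{eq:T_filt} (which stabilizes in each length grading) together with additivity of Euler characteristic to write each side as $\sum_m \chi(Q_m^{\,\cdot\,,\ell})$, and then invoke \Cref{cor:same_ec} term by term. The paper's argument is the same bookkeeping, phrased only slightly differently.
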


\begin{proof}
By \Cref{thm:MH_mag} we have
\begin{equation}\label{magX}
\Mag(X) = \sum_{\ell \in \bb{N}} \chi(MC_\bullet^\ell(X)) q^\ell
\end{equation}
and 
\begin{equation}\label{magY}
\Mag(Y) = \sum_{\ell \in \bb{N}} \chi(MC_\bullet^\ell(Y)) q^\ell.
\end{equation}
As the filtration
\[0 \subseteq T_0^X \subseteq T_1^X \subseteq \cdots T_m^X \subseteq \cdots \subseteq MC(X)\]
and the similar filtration of \(MC(Y)\) both stabilize, in any given length grading \(\ell\), after the \(\ell^\th\) term, the additivity of Euler characteristic implies that we can calculate the \(\ell^\th\) coefficient in (\ref{magX}) as
\[\chi(MC_\bullet^\ell(X)) = \chi(T_\ell^{X, \ell}) = \sum_{k=0}^{\ell} \chi(Q_k^{X, \ell})\]
the \(\ell^\th\) coefficient in (\ref{magY}) as
\[\chi(MC_\bullet^\ell(Y)) = \chi(T_\ell^{Y, \ell}) = \sum_{k=0}^{\ell} \chi(Q_k^{Y, \ell}).\]
Now \Cref{cor:same_ec} {tells us} that for every \(k, \ell \in \bb{N}\) we have \(\chi(Q_k^{X, \ell}) = \chi(Q_k^{Y, \ell})\) and hence \(\chi(MC_\bullet^\ell(X)) = \chi(MC_\bullet^\ell(Y))\) for every \(\ell\). The theorem follows on comparing coefficients in (\ref{magX}) and (\ref{magY}).
\end{proof}

%%%%%%%%%%%%%%%%%%%%%%%%%%%%

\section{Proof of Proposition 6.3}\label{sec:excise}

It remains to prove that for each \(m \in \bb{N}\) the subcomplex of \(Q_m^X\) spanned by the non-twistable paths is acyclic. The main idea of this proof is adapted from Hepworth and Willerton's Lemma 9.2 in \cite{HepworthWillerton2017}, a central component of their excision theorem: it exploits the projection property of vertices in \(H_*\) to construct a contracting chain homotopy.

Essentially, given a non-twistable generator \(\vec{x}\) whose first sticky subpath \((x_i, \ldots, x_j)\) \textbf{crosses from \(H_*\) to \(G \backslash K\)}---meaning that \(x_i\) belongs to \(H_*\) and \(x_j\) belongs to \(G \backslash K\)---the homotopy \(s\) inserts the vertex \(\pi(x_i)\) immediately after \(x_i\). By definition of stickiness, the vertex \(x_{i+1}\) must belong to \(G\), so \(\pi(x_j)\) lies between \(x_{i}\) and \(x_{i+1}\) and thus we have
\begin{equation}\label{eq:contract1}
\delta_{i+1}(s(\vec{x})) = \vec{x}.
\end{equation}
If, instead, the first sticky subpath in \(\vec{x}\) crosses from \(G\backslash K\) to \(H_*\), then \(s\) inserts \(\pi(x_j\)) immediately before \(x_j\), and we have 
\begin{equation}\label{eq:contract2}
\delta_j(s(\vec{x})) = \vec{x}.
\end{equation}
Ultimately the fact that \(s\) is a contraction comes down to equations \ref{eq:contract1} and \ref{eq:contract2}.

\begin{propex_acyc}
Let \(E_m^X\) denote the subcomplex of \(Q_m^X\) spanned by non-twistable paths, and define \(E_m^Y\) similarly. For every \(m \in \bb{N}\), the complexes \(E_m^X\) and \(E_m^Y\) are both acyclic.
\end{propex_acyc}

We will prove the statement for \(E_m^X\). The same proof goes through for \(E_m^Y\), since it depends only on the common properties of \(X\) and \(Y\) established in \Cref{sec:props_good_twist}---in particular, the fact that vertices in \(H_*\) project, in both \(X\) and \(Y\), to \(G\).

\begin{proof}
First, {observe that} \(E_m^X \cong E_m^X(G,H) \oplus E_m^X(H,G)\) where \(E_m^X(G,H)\) is spanned by  paths whose first sticky subpath crosses from \(G \backslash K\) to \(H_*\), and  \(E_m^X(H,G)\) is spanned by paths whose first sticky subpath crosses from \(H_*\) to \(G \backslash K\). We are going to establish that for every \(m \in \bb{N}\) the complexes \(E_m^X(G,H)\) and \(E_m^X(H,G)\) are both acyclic. We prove it in full for \(E_m^X(H,G)\) and sketch the proof for \(E_m^X(G,H)\). 

Given a generator \(\vec{x}\) of \(E_m^X(H,G)\), denote the index of the first point of its first sticky subpath by \(F_\vec{x}\). For each \(i \geq 0\), let \(N_\bullet(i)\) be the subcomplex of \(E_m^X(H,G)\) spanned by paths \(\vec{x}\) such that \(F_\vec{x} \leq i\). (That is, such that the subpath \((x_{0}, \ldots, x_i)\) overlaps with the first sticky subpath.) For every \(i\) we have \(N_k(i) = E_{m,k}^X(H,G)\) for all \(k \leq i\), and for each \(i\) there is an inclusion \(N_\bullet(i) \subseteq N_\bullet(i+1)\). Indeed, we can filter \(E_m^X(H,G)\) as
\[0 = N_\bullet(-1) \subseteq N_\bullet(0) \subseteq N_\bullet(1) \subseteq \cdots \subseteq N_\bullet(i) \subseteq \cdots \subseteq E_m^X(H,G)\]
and again this filtration stabilizes in every length grading. We will show that for each \(i \geq 0\) the quotient \(N_\bullet(i) / N_\bullet(i-1)\) is contractible. Thus, on passing to homology, each of these inclusions becomes an isomorphism and we can conclude that \(E_m^X(H,G)\) is acyclic.

The complex \(N_\bullet(i) / N_\bullet(i-1)\) is freely generated in degree \(k\) by paths \(\vec{x} = (x_0, \ldots, x_k)\) such that \(F_\vec{x} = i\); that is, paths whose first sticky subpath begins with the \(i^\th\) vertex. To understand the boundary operator, notice that removing any one of the vertices \(x_0, \ldots, x_{i-1}\) from \(\vec{x}\) yields a path whose first sticky subpath begins with the \((i-1)^\th\) vertex; this goes to zero in the quotient. Since \(x_i\) is the endpoint of a sticky subpath, it belongs to \(X \backslash (K \cup H_0)\), so removing it also sends the generator to zero. Thus, \(d = \sum_{j=i+1}^{k-1} (-1)^j \delta_j\) where \(\delta_j\) removes \(x_j\) unless \(x_j\) belongs to \(X \backslash (K \cup H_0)\) or removing it reduces the length of the path; in either of those cases \(\delta_j(\vec{x}) = 0\).

Define a map \(s: N_k(i)/N_k(i-1) \to N_{k+1}(i) / N_{k+1}(i-1)\) on generators \((x_0, \ldots, x_k)\) as follows. As \(x_{i}\) belongs to \(H_*\), it projects to \(G\) in \(X\). Let \(\pi(x_{i})\) denote the gluing vertex it projects through, and put
\[s(x_0, \ldots, x_k) = \begin{cases}
(-1)^{i+1} (x_0, \ldots, x_{i}, \pi(x_{i}), x_{i+1}, \ldots, x_k) & \text{if } x_{i+1} \neq \pi(x_{i}) \\
0 & \text{if } x_{i+1} = \pi(x_{i}).
\end{cases}\]
I claim this is a contracting homotopy: that \(s \circ d + d \circ s\) is the identity on \(N_k(i)/N_k(i-1)\), or equivalently that for every generator \(\vec{x} = (x_0, \ldots, x_k)\) we have
\begin{equation}\label{eq:ch_1}
\sum_{j=i+1}^{k-1} (-1)^j s \delta_j^k(\vec{x}) + \sum_{j=i+1}^{k} (-1)^j \delta_j^{k+1} s (\vec{x}) = \vec{x}.
\end{equation}

For \(j=i+2, \ldots, k-1\) we have \(s \delta_{j}^k(\vec{x}) = \delta_{j+1}^{k+1} s(\vec{x})\), so most terms on the left of (\ref{eq:ch_1}) cancel, leaving
\begin{equation}\label{eq:ch_2}
(-1)^{i+1} s \delta_{i+1}^k(\vec{x}) + (-1)^{i+1} \delta_{i+1}^{k+1} s (\vec{x}) + (-1)^{i+2} \delta_{i+2}^{k+1} s (\vec{x}).
\end{equation}

If \(x_{i+1} = \pi(x_{i})\), the second and third terms of (\ref{eq:ch_2}) vanish, leaving
\begin{align*}
(-1)^{i+1} s \delta_{i+1}^k(x_0, \ldots, x_k) &= (-1)^{i+1} s (x_0, \ldots, x_{i}, x_{i+2}, \ldots x_k) \\
&= (-1)^{2(i+1)} (x_0, \ldots, x_{i}, \pi(x_{i}), x_{i+2}, \ldots x_k) \\
&= (x_0, \ldots, x_{i}, x_{i+1}, x_{i+2}, \ldots x_k).
\end{align*}
Here, the first equation holds since \(x_{i+2}\) is in \(G\)---it's either a gluing vertex, or it's the other end of the sticky subpath---so \(x_{i+1} = \pi(x_{i})\) lies between it and \(x_{i}\).

Suppose \(x_{i+1} \neq \pi(x_{i})\); then \(x_{i+1}\) may be a gluing vertex, or it may be the other end of the sticky subpath. In the latter case it belongs to \(G \backslash K\), so \(\delta_{i+1}^k(\vec{x}) = 0 = \delta_{i+2}^{k+1}s(\vec{x})\) and (\ref{eq:ch_2}) reduces to
\begin{align*}
(-1)^{i+1}\delta_{i+1}^{k+1} s(x_0, \ldots, x_k) &= (-1)^{2(i+1)} \delta_{i+1}^{k+1} (x_0, \ldots, x_{i}, \pi(x_i), x_{i+1}, \ldots, x_k) \\
&= (x_0, \ldots, x_{i}, x_{i+1}, \ldots, x_k)
\end{align*}
where the second equation holds because \(x_{i+1} \in G\).

Finally, suppose \(x_{i+1}\) is a gluing vertex (but not the vertex \(\pi(x_{i})\)); then \(x_{i+2}\) is either a gluing vertex or an element of \(G \backslash K\). There are two cases to consider: either \(x_{i+1} \in K\) lies between \(x_{i+2} \in G\) and \(x_{i} \in H_*\), or it doesn't. In the former case, {by \Cref{lem:pi_between}} and \Cref{lem:proj_in_twist}, \(x_{i+1}\) must also lie between \(x_{i+2}\) and \(\pi(x_{i})\); in the latter case, it must not.
Thus, in the former case we have
\begin{align*}
(-1)^{i+1} s \delta_{i+1}^k(\vec{x}) & + (-1)^{i+1} \delta_{i+1}^{k+1} s (\vec{x}) + (-1)^{i+2} \delta_{i+2}^{k+1} s(\vec{x}) \\
= & (-1)^{i+1} s (x_0, \ldots, x_i, x_{i+2}, \ldots, x_k) \\
& + (-1)^{2(i+1)} \delta_{i+1}^{k+1} (x_0, \ldots, x_i, \pi(x_i), x_{i+1}, \ldots, x_k) \\
& + (-1)^{2i+3} \delta_{i+2}^{k+1} (x_0, \ldots, x_i, \pi(x_i), x_{i+1}, \ldots, x_k) \\
= & (-1)^{2i + 2} (x_0, \ldots, x_i, \pi(x_i), x_{i+2}, \ldots, x_k) \\
& + (-1)^{2(i+1)} (x_0, \ldots, x_i, x_{i+1}, \ldots, x_k) \\
& + (-1)^{2i+3} (x_0, \ldots, x_i, \pi(x_i), x_{i+2}, \ldots, x_k) \\
= & (x_0, \ldots, x_k)
\end{align*}
as required, while in the latter case we have \(\delta_{i+1}^k(\vec{x}) = 0 = \delta_{i+2}^{k+1} s(\vec{x})\), and (\ref{eq:ch_2}) reduces again to
\begin{align*}
(-1)^{i+1} \delta_{i+1}^{k+1} s(x_0, \ldots, x_k) &= (-1)^{2(i+1)} \delta_{i+1}^{k+1} (x_0, \ldots, x_{i}, \pi(x_{i}), x_{i+1}, \ldots, x_k) \\
&= (x_0, \ldots, x_k).
\end{align*}
In every case equation (\ref{eq:ch_1}) holds, and we can conclude that the complex \(E_m^X(H,G)\) is acyclic.

The proof for \(E_m^X(G,H)\) is very similar; we will sketch the set-up to highlight where things differ. This time, given a generator \(\vec{x}\), denote the index of the \emph{last} point of its {first} sticky subpath by \(L_\vec{x}\), and for each \(i \geq 0\) let \(M_\bullet(i)\) be the subcomplex of \(E_m^X(G,H)\) spanned by paths \(\vec{x}\) such that \(L_\vec{x} \geq k-i\). (That is, such that the subpath \((x_{k-i}, \ldots, x_k)\) overlaps with the first sticky subpath.) Thus we obtain a filtration
\begin{equation}\label{eq:filt_M}
0 = M_\bullet(-1) \subseteq M_\bullet(0) \subseteq M_\bullet(1) \subseteq \cdots \subseteq M_\bullet(i) \subseteq \cdots \subseteq E_m^X(G,H),
\end{equation}
which stabilizes in every length grading as before.

Now, for each \(i\), define a map \(s: M_k(i)/M_k(i-1) \to M_{k+1}(i) / M_{k+1}(i-1)\) on generators \((x_0, \ldots, x_k)\) as follows. As \(x_{k-i}\) belongs to \(H_*\), it projects to \(G\) in \(X\). Let \(\pi(x_{k-i})\) denote the gluing vertex it projects through, and put
\[s(x_0, \ldots, x_k) = \begin{cases}
(-1)^{k-i} (x_0, \ldots, x_{k-i-1}, \pi(x_{k-i}), x_{k-i}, \ldots, x_k), & x_{k-i-1} \neq \pi(x_{k-i}) \\
0, & x_{k-i-1} = \pi(x_{k-i}).
\end{cases}\]
An argument mirroring the one above proves that \(s\) is a contracting chain homotopy. Thus, on passing to homology, each inclusion in (\ref{eq:filt_M}) becomes an isomorphism, and we can conclude that \(E_m^X(G,H)\) is acyclic.
\end{proof}

%%%%%%%%%%%%%%%%%%%%%%%%%%%%%%%%%%%%%%%%%%%%%%%%%%%%

\nocite{}
\printbibliography

\end{document}